\newcommand{\nn}{\mathbb{N}}
\newcommand{\zz}{\mathbb{Z}}
\newcommand{\qq}{\mathbb{Q}}
\newcommand{\pp}{\mathbb{P}}
\newcommand{\ed}{{\rm ed }}
\newcommand{\cd}{{\rm cd }}
\newcommand{\rank}{{\rm rank}\;}
\newcommand{\trdeg}{{\rm trdeg}_k }
\theoremstyle{plain}
\newtheorem{prop}{Proposition}[section]
\newtheorem{thm}{Theorem}[section]
\newtheorem{lem}{Lemma}[section]
\newtheorem{cor}{Corollary}[section]
\theoremstyle{definition}
\newtheorem{defn}{Definition}[section]
\newtheorem*{exs}{Examples}
\theoremstyle{remark}
\newtheorem*{rem}{Remark}
\begin{document}
\title{A Valuation Theoretic Approach to Essential Dimension}
\author{Aurel Meyer\\	
	Department of Mathematics \\
	University of British Columbia\\
	Vancouver, BC  \\Canada V6T1Z2\\}

\begin{abstract}
In this essay we explore the notion of essential dimension using the theory of valuations of fields. Given a field extension $K/k$ and a valuation on $K$ that is trivial on $k$, we prove that the rank of the valuation cannot exceed the transcendence degree $\trdeg K$. We use this inequality to prove lower bounds on the essential dimension in some interesting situations. We study orbits of a torus action and find a formula for the essential dimension of the functor of these orbits.


\end{abstract}
\maketitle

\section{Introduction}
Throughout this paper, the ground field $k$ is assumed to be an algebraically closed field of characteristic $0$. 
Let $\mathcal{K}$ be the category of field extensions $K$ of $k$, $\mathcal{C}$ the category of sets and 
\[
\mathcal{F}:\mathcal{K}\rightarrow\mathcal{C}
\]
a covariant functor.
\begin{exs}
1)\quad The functor of elliptic curves. We denote it by $\mathcal{F}_{El}$. It assigns to each field $K\in\mathcal{K}$ the set of all elliptic curves defined over $K$.\\
2)\quad The functor $\mathcal{F}_{m,d}$ of homogeneous forms in a rigid system. Let $f$ be a homogeneous $d-$form in $K^m$ and $L_1,...,L_m$ lines in general position. Define $\mathcal{F}_{m,d}(K)$ to be the set of equivalence classes $[f,L_1,...,L_m]$ where equivalence is given by an isomorphism on $K^m$ that preserves the lines and the form. \\
3)\quad The functor of orbits, $\mathcal{F}_{Orb}$. Let $X$ be a variety defined over $k$ and $G$ be an algebraic group acting on $X$. Then $\mathcal{F}_{Orb}(K)$ is set of all $G(K)$-orbits in $X(K)$.\\
4)\quad  Let $G$ be an algebraic group. Define the $G$-functor $\mathcal{F}_G(K)=H^1(K,G)$ as the first Galois cohomology set.
\end{exs}
One might be interested in the minimal number of independent parameters needed to define an object in $\mathcal{F}(K)$ or the minimal number of independent parameters needed to define the structure given by the functor $\mathcal{F}$. Essential dimension is a measure for these numbers. As usual, essential dimension will be denoted by $\ed()$. The following definition is due to A. Merkurjev (unpublished) and can be found in \cite{BF}:

For an object $\alpha\in\mathcal{F}(K)$ define
\begin{defn}\label{ED-O}
$\ed(\alpha)=\min\{\trdeg(K_0)\mid \alpha \in \mbox{im}(\mathcal{F}(K_0)\rightarrow\mathcal{F}(K))\}$
where $K_0$ ranges over field extension $k\subset K_0 \subset K$ and $\mathcal{F}(K_0)\rightarrow\mathcal{F}(K)$ is a morphism.
\end{defn}\label{ED-F}
The essential dimension of the functor $\mathcal{F}$ is defined to be
\begin{defn}
$\ed(\mathcal{F})=\max\{\ed(\alpha)\mid K \in \mathcal{K}, \alpha \in \mathcal{F}(K)\}$.
\end{defn}

Essential dimension was first introduced by J. Buhler and Z. Reichstein in \cite{BR1} for finite groups and by Z. Reichstein for algebraic groups in \cite{Re1}. 
Since then, many mathematicians have studied essential dimension in different contexts.
In this essay we will consider all of the functors from the examples above and compute the essential dimension in particular cases.

In the first sections our focus will be on valuations of fields. Let $K\in \mathcal{K}$ be a field extension of $k$. A valuation
\[
\upsilon: K^*\rightarrow G
\] 
is a group homomorphism onto the finitely generated, abelian and ordered group $G$ (called the valuation group). We assume that $\upsilon$ is trivial on $k$.
In Theorem \ref{mthm} we will prove, that for any valuation the inequality 
\[
\trdeg K\ge \rank G
\]
holds. This in turn can be used to prove lower bounds on the essential dimension of objects in $\mathcal{F}(K)$. Indeed, given an object $\alpha \in \mathcal{F}(K)$, one can try to construct a valuation $\upsilon: K^*\rightarrow G$ such that for any field $K_0\subset K$ with $\alpha\in\mbox{im}(\mathcal{F}(K_0)\rightarrow\mathcal{F}(K))$ the induced valuation $\upsilon|_{K_0}: K_0^*\rightarrow \upsilon(K_0^*)$ is of a certain, preferably high, rank. The choice of the valuation is of course influenced by the specific structure of the functor $\mathcal{F}$.
 
In section \ref{val}, assuming that $K$ is the function field of a variety $X$, we show a method of constructing valuations 
\[
\upsilon:K^*=k(X)^*\rightarrow\zz^r
\]
through chains of divisors (hypersurfaces) 
\[
X=H_0\supset H_1\supset...\supset H_r.
\]
The main example is rational functions in $n$ variables. Let $X=\mathbb{A}^n$ so that $K=k(X)=k(x_1,...,x_n)$ and the divisors be intersections of the coordinate hyperplanes: 
\[
H_1=\{x_1=0\},\;H_2=\{x_1=x_2=0\},...,H_n=\{x_1=...=x_n=0\}.
\]
The valuation one gets this way, assigns to every function its order of vanishing at $x_1=0,x_2=0$, etc. i.e. is defined by $\upsilon(x_i)=e_i\in\zz^n$.
This simple valuation often suffices our purpose.

In section \ref{ota} we study the orbits of a torus action on a variety. 
For a field $K/k$, the torus $T=(K^*)^m$ acts on $K^n$ via characters 
\[
(t\cdot a)_i=\chi_i(t)a_i\quad i=1,...,n\quad\mbox{where}
\]
\[ 
\chi_i(t)=\chi_i(t_1,...,t_m)=t_1^{e_{i1}}\cdots t_m^{e_{im}}\in K^*.
\] 
All information about this action is stored in the exponent matrix 
\[
E=\left(\begin{array}{ccc}e_{11}&\cdots&e_{1m}\\
\\\vdots&&\vdots\\\\
e_{n1}&\cdots&e_{nm}\end{array}\right)\in M_{n,m}(\zz)
\]
We write $\mathcal{F}_E$ for the functor of orbits of this action. We will determine the essential dimension of $\mathcal{F}_E$ with the valuation theoretic approach and write it in terms of the elementary divisors $d_1,...,d_r$ of the matrix $E$:
\[
\ed(\mathcal{F}_E)=n-\mbox{\# ones among }\{d_1,...,d_r\}
\]
(Theorem \ref{edgen} and Theorem \ref{edF}).
As important examples we calculate the essential dimension of elliptic curves,
\[
\ed(\mathcal{F}_{El})=2 \quad\mbox{(Corollary \ref{elc})}
\]
and of rigid homogeneous $d-$forms in $m$ variables (section \ref{hf}): 
\[
\ed(\mathcal{F}_{m,d})=
\left\{\begin{array}{ll}{{m+d-1}\choose d}-m+1&d>1\\0&d=1\end{array}\right..
\] 

In section \ref{ps} we consider the torus action in projective space. The functor of orbits there we denote by $\mathcal{PF}_E$ and show that 
\[
\ed(\mathcal{F}_E)\ge\ed(\mathcal{PF}_E)\ge\ed(\mathcal{F}_E)-1.\quad\mbox{(Proposition \ref{F-PF})}
\]
An example here are hypersurfaces in projective space.

In section \ref{FAG} we shift our attention to finite abelian groups. The essential dimension of an algebraic group $G$ is defined to be the essential dimension of the functor $\mathcal{F}_G$ (example 4). We will also recall the original definition of essential dimension of finite groups as in \cite{BR1}. With the valuation theoretic approach we prove that the essential dimension of finite abelian groups is its rank, a result that was proved with different methods in \cite{BR1}, \cite[Example 7.4]{RY} or \cite[Prop. 3.7]{BF} .

In the last section we will consider another numerical invariant, the canonical dimension of an algebraic group (after G. Berhuy and Z. Reichstein, \cite{BR3}) and relate some earlier results to the notion of canonical dimension.

\subsection*{Acknowledgments}
I thank my supervisor Zinovy Reichstein for providing me with ideas and for helpful comments and discussions.

\section{Preliminaries}
In this section we recall the necessary definitions and results from valuation theory and we will show how to construct a valuation of the function field of an algebraic variety through a chain of divisors. For more details on valuations we refer to \cite{HP}, \cite{AM} and \cite{La}.
\subsection{Valuations}
Let $R$ be an integral domain and $K$ its field of fractions. $R$ is called a \textit{valuation ring} of $K$ if for each $x\in K^*, x$ or $x^{-1}$ is in $R$ (or both).\\
Let $\mathfrak{m}$ be the set of non-units of $R$ and $U$ be the units.
\begin{prop}
$R$ is an integrally closed local ring with unique maximal ideal $\mathfrak{m}$.
\end{prop}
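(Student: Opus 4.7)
There are two essentially independent assertions to establish: that $R$ is local with unique maximal ideal equal to the non-unit set $\mathfrak{m}$, and that $R$ is integrally closed in $K$. I would treat them in that order.

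\emph{Locality.} Since $\mathfrak{m}$ is by definition the complement of the unit group $U$, it suffices to show that $\mathfrak{m}$ is an ideal: every proper ideal of $R$ consists of non-units and is therefore contained in $\mathfrak{m}$, which forces $\mathfrak{m}$ to be the unique maximal ideal and $R$ to be local. Closure of $\mathfrak{m}$ under multiplication by arbitrary $r\in R$ is formal and does not even use the valuation hypothesis: if $x\in\mathfrak{m}$ and $rx\in U$, then $r(rx)^{-1}$ would invert $x$ in $R$, contradicting $x\in\mathfrak{m}$. The one delicate step is closure under addition, and this is the only place where the defining property of a valuation ring really enters. Given nonzero $x,y\in\mathfrak{m}$, the dichotomy $x/y\in R$ or $y/x\in R$ applies; assuming the former, write $x+y=y(1+x/y)$ and invoke the multiplicative closure already established to conclude $x+y\in\mathfrak{m}$.

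\emph{Integral closure.} Suppose, toward a contradiction, that some $x\in K\setminus R$ satisfies a monic relation
\begin{equation*}
x^{n}+a_{n-1}x^{n-1}+\cdots+a_{1}x+a_{0}=0,\qquad a_i\in R.
\end{equation*}
Since $x\notin R$, the valuation hypothesis forces $x^{-1}\in R$. Multiplying the relation by $x^{-(n-1)}$ and solving for $x$ expresses it as a polynomial in $x^{-1}$ with coefficients in $R$, so $x\in R$, contradicting the choice of $x$.

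\emph{Expected obstacle.} Neither half is genuinely hard; the only nonformal point is the additive closure of $\mathfrak{m}$, where one has to actually invoke the dichotomy $x/y\in R$ or $y/x\in R$. Everything else is a direct manipulation with units and the integral equation, so the proof should be short.
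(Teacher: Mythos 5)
Your proof is correct and is precisely the standard argument given in Atiyah--Macdonald (Prop.\ 5.18), which is what the paper cites in lieu of a proof: show $\mathfrak{m}$ is an ideal (using the $x/y\in R$ or $y/x\in R$ dichotomy only for additive closure), conclude locality, and for integral closure multiply the monic relation through by $x^{-(n-1)}$ after noting $x^{-1}\in R$. Nothing to add.
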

\begin{proof}
\cite[Prop 5.18]{AM}.
\end{proof}

Let $G$ be a finitely generated Abelian group with an ordering "$>$", i.e. \\
if $g,h\in G, g,h>0\Rightarrow 0>-g$ and $g+h>0$.\\
Note that $G$ has no element of finite order hence $G\cong \zz^n$ for some $n$.
\begin{defn}
For a field $K$, a \textit{valuation} on $K$ is a surjective map 
$$\upsilon:K^*\rightarrow G\quad \mbox{such that}$$
i)\quad$\upsilon(xy)=\upsilon(x)+\upsilon(y)$\\
ii)\quad$\upsilon(x+y)\ge\min\{\upsilon(x),\upsilon(y)\}$\quad if $x+y\ne 0$.
\end{defn}
\noindent In ii) equality holds if $\upsilon(x)\ne\upsilon(y)$. $\upsilon(0)$ is sometimes set to infinity.
The set 
\[
R=\{x\in K^*\mid\upsilon(x)\ge0\}\cup\{0\}
\]
is a valuation ring with maximal ideal 
\[
\mathfrak{m}=\{x\mid\upsilon(x)>0\}.
\]
In what follows, we assume that all fields are extensions $K/k\in\mathcal{K}$ and the valuations are trivial on $k$, $\upsilon(k^*)=0$.
If we have a field extension $K\supset F\supset k$, then a valuation $\upsilon$ on $K$ induces a valuation 
\[
\upsilon|_F:F^*\rightarrow\upsilon(F^*)
\]
and 
$\upsilon(F^*)$ is an ordered subgroup of $G$. 
\begin{defn}
The \textit{rank} of the valuation $\upsilon$ is defined to be the rank of the valuation group $G$.
\end{defn}
\noindent The rank of a valuation will play an important role in what follows. \\
For any valuation $\upsilon$ with valuation ring $R$ and maximal ideal $\mathfrak{m}$ one has a field $R/\mathfrak{m}$ which is called the \textit{residue field} of the valuation.\\
A valuation $\upsilon$ is called \textit{discrete} if the valuation group $G$ is isomorphic to $\zz$. Its associated valuation ring $R$ is called a \textit{discrete valuation ring}.\\
Now let $\upsilon: K^*\rightarrow\zz$ be a discrete valuation.
Since $\upsilon$ is surjective, there is a $\pi\in\mathfrak{m}^*$ with $\upsilon(\pi)=1$. 
\begin{defn}An element $\pi\in K^*$ with $\upsilon(\pi)=1$ is called a
\textit{local} or \textit{uniformizing parameter} of the valuation.
\end{defn}
\noindent Every $f\in \mathfrak{m}^*$ can be written as 
$$f=\pi^ku$$
with unique $k\in\zz$ and $u\in U$ a unit in $R$ (\cite[12.6]{La}). One could say $f$ has a pole ($k<0$) or a singularity ($k>0$) of order $k$ at $\pi=0$. Clearly $\mathfrak{m}=(\pi)$. It can be proved that $R$ is Noetherian and that $\mathfrak{m}$ is the only prime ideal of $R$, in particular $R$ is of dimension $1$. 
\begin{prop}\label{vr}
Let $R$ be a Noetherian local ring of dimension $1$. Then $R$ is a discrete valuation ring if and only if $R$ is integrally closed in its field of fractions.
\end{prop}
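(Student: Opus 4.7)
The proposition is a standard characterization of DVRs in commutative algebra, so the plan is to follow the classical argument while being attentive to which hypothesis enters where.

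The easy direction is $(\Rightarrow)$. If $R$ is a discrete valuation ring with uniformizer $\pi$, then as noted in the text every nonzero element of $R$ has the form $u\pi^k$ with $k\ge0$ and $u$ a unit, so every nonzero ideal is $(\pi^k)$ for some $k\ge 0$; in particular $R$ is a principal ideal domain, hence a unique factorization domain, and every UFD is integrally closed in its field of fractions. (Alternatively: if $x\in K^*$ is integral over $R$, then from an integral equation one reads off that $\upsilon(x)\ge 0$, so $x\in R$.)

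The substantive direction is $(\Leftarrow)$. Assume $R$ is a Noetherian local domain of dimension $1$, integrally closed in $K=\mathrm{Frac}(R)$, with maximal ideal $\mathfrak{m}$. The target is to show that $\mathfrak{m}$ is principal; once this is achieved one defines $\upsilon(x)$ to be the largest $k$ with $x\in\mathfrak{m}^k$, the finiteness of which is guaranteed by the Krull intersection theorem $\bigcap_n \mathfrak{m}^n=0$, and verifies the valuation axioms directly. So the heart of the proof is the principality of $\mathfrak{m}$.

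The plan for principality is the usual trick of manufacturing an element $x\in K$ with $x\mathfrak{m}\subseteq R$ but $x\notin R$. Since $R$ is Noetherian of dimension $1$ with unique nonzero prime $\mathfrak{m}$, every nonzero ideal contains a power of $\mathfrak{m}$; pick any nonzero $a\in\mathfrak{m}$ and let $n$ be minimal with $\mathfrak{m}^n\subseteq(a)$. Choose $b\in\mathfrak{m}^{n-1}\setminus(a)$ and set $x=b/a\in K$. By the choice of $b$ we have $x\notin R$, while $b\mathfrak{m}\subseteq\mathfrak{m}^n\subseteq(a)$ gives $x\mathfrak{m}\subseteq R$.

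The main obstacle, and the step that uses integral closedness, is ruling out $x\mathfrak{m}\subseteq\mathfrak{m}$. If this inclusion held then, because $\mathfrak{m}$ is a faithful finitely generated $R$-module (finiteness from Noetherianness, faithfulness from $R$ being a domain), the determinant/Cayley–Hamilton trick would express $x$ as a root of a monic polynomial over $R$, forcing $x\in R$ by the hypothesis that $R$ is integrally closed, a contradiction. Therefore $x\mathfrak{m}$ is an ideal of $R$ not contained in $\mathfrak{m}$, hence equals $R$. Multiplying back by $1/x$ gives $\mathfrak{m}=(1/x)R$, a principal ideal; note that $1/x\in\mathfrak{m}\subseteq R$ automatically. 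With $\pi:=1/x$ as uniformizer, the valuation $\upsilon:K^*\to\mathbb{Z}$ is now defined as above and the proposition follows.
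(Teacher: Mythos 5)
The paper does not actually supply a proof here: its ``proof'' is the single citation \cite[Prop 9.2]{AM}. Your argument is correct, and it is in substance the same argument found in that reference, so there is no real divergence to report. The forward direction (DVR $\Rightarrow$ PID $\Rightarrow$ UFD $\Rightarrow$ integrally closed, or the direct valuation-theoretic check) is standard. For the converse, the chain you run --- existence of $n$ with $\mathfrak{m}^n\subseteq(a)$ because $\mathfrak{m}$ is the unique nonzero prime of a Noetherian domain, the choice of $b\in\mathfrak{m}^{n-1}\setminus(a)$ and $x=b/a$, the determinant/Cayley--Hamilton step that turns $x\mathfrak{m}\subseteq\mathfrak{m}$ into integrality of $x$ over $R$, and the conclusion $x\mathfrak{m}=R$ so $\mathfrak{m}=(x^{-1})$ --- is exactly the $(ii)\Rightarrow(iii)$ step in Atiyah--Macdonald's cycle of equivalences, and you correctly flag that integral closedness enters only at the Cayley--Hamilton step and that Krull's intersection theorem is what makes the order function finite.

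One small point to tidy: as stated, ``$\upsilon(x)=$ the largest $k$ with $x\in\mathfrak{m}^k$'' defines $\upsilon$ only on $R\setminus\{0\}$, whereas the valuation is a map $K^*\to\mathbb{Z}$. You should add the one-line extension $\upsilon(r/s)=\upsilon(r)-\upsilon(s)$, together with the observation that (using $\mathfrak{m}=(\pi)$ and $\bigcap_n\mathfrak{m}^n=0$) every nonzero $r\in R$ factors uniquely as $u\pi^k$ with $u$ a unit, which makes $\upsilon$ multiplicative on $R\setminus\{0\}$ and hence well-defined and a homomorphism on $K^*$. With that noted, the proof is complete.
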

\begin{proof}
\cite[Prop 9.2]{AM}.
\end{proof}

\subsection{Valuations of Function Fields}\label{val}
Now we will construct valuations of the function field of a variety. We will see that any prime divisor of the variety gives rise to a discrete valuation and we can repeat the construction to get valuations of higher ranks.\\
Let $X$ be a normal irreducible algebraic variety. Assume first that $X$ is affine. Let
$k[X]$ be its coordinate ring.
An irreducible subvariety $H\subset X$ of codimension $1$ is called a \textit{prime divisor} (or \textit{hypersurface}). Let
\[
I(H)=\{f\in k[X]\mid f(x)=0 \;\forall\; x\in H\}\quad\mbox{its ideal,}
\]
\[
\mathcal{O}_{X,H}=k[X]_{I(H)}
\]
the local ring of $H$ of all rational functions defined at some point of $H$.\\
Note that since $H$ is irreducible, $I(H)$ is a prime ideal in $\mathcal{O}_{X,H}$ and 
$\dim\mathcal{O}_{X,H}=\dim X-\dim H=1$.
By the assumption that $X$ is normal, $\mathcal{O}_{X,H}$ is integrally closed in $k(X)$ (see
\cite[Lemma II.5.1]{Sha}).
It follows from Proposition \ref{vr} that $\mathcal{O}_{X,H}$ is a valuation ring for a valuation
\[
\upsilon_H:k(X)^*\rightarrow k(X)^*/U\cong\zz
\]
where $U$ denotes the units in $\mathcal{O}_{X,H}$. We can choose a uniformizing parameter $\pi\in I(H)^*$ such that
\[
\begin{array}{c}
\upsilon_H(\pi)=1\\
\upsilon_H(f)>0\iff f\in I(H)^*\\
\upsilon_H(f)=0\iff f\in U
\end{array}
\]
We can see that the residue field of this valuation is isomorphic to the function field of $H$:
\begin{equation}\label{iso}
\mathcal{O}_{X,H}/I(H)\cong U|_H\cup\{0\}\cong k(H)
\end{equation}
Note that $I(H)=(\pi)$ and $H$ can be replaced by $\{\pi=0\}\subset X$.\\
Now let $X$ be any (quasi-projective) variety. We cannot expect to find a unique local parameter that cuts out the divisor.
Here we choose an open normal and affine set $Y\subset X$  such that it intersects $H$. Then we get a valuation
\[
\upsilon_{H\cap Y}:k(X)^*=k(Y)^*\rightarrow\zz
\]
This construction depends on $Y$, so consider another open and affine subset $Y^\prime$ that intersects $H$. First, if $Y^\prime\subset Y$ and $\pi$ is a uniformizing parameter in $Y$, then so it is in $Y^\prime$. The valuations obtained from $Y$ and $Y^\prime$ can therefore differ by at most the choice of a different uniformizing parameter. If $Y^\prime$ is arbitrary, then, since $H$ is irreducible, $Y\cap Y^\prime\neq\emptyset$. Choose an affine open neighbourhood $Z$ of a point $x\in Y\cap Y^\prime\cap H$ and it follows that the valuations defined through $Z$, $Y$ and $Y^\prime$ all differ by at most the choice of a different uniformizing parameter. Proposition \ref{up} will give a complete answer how much that is.\\

Next we want to repeat this construction and get a valuation
\[
\upsilon:k(X)^*\rightarrow\zz^r
\]
Assume we have a chain
\[
X=H_0\supset H_1\supset...\supset H_r\quad(r\le \dim X)
\]
of irreducible subvarieties such that each $H_i$ is of codimension $1$ in $H_{i-1}$ and $H_0,...,H_{r-1}$ are normal.
Given a collection of uniformizing parameters $\pi_1,...,\pi_r$ we get valuations
\[
\upsilon_{H_i}:k(H_{i-1})^*\rightarrow\zz
\]
with valuation rings $\mathcal{O}_{H_{i-1},H_i},\,i=1,...,r$.
By \ref{iso} identify $k(H_{i-1})$ with the residue field of $\upsilon_{H_i}$.
Then a rational function $f\in k(X)^*$ can be written uniquely 
\[
f=\pi_1^{k_1}\cdots\pi_r^{k_r}u\quad k_1,...,k_r\in\zz, \upsilon_{H_r}(u)=0
\]
where the residue of $\pi_i^{k_i}\cdots\pi_r^{k_r}u$ is a rational function in $k(H_{i-1})^*$ and \\ $k_i=\upsilon_{H_i}(\pi_i^{k_i}\cdots\pi_r^{k_r}u|_{H_{i-1}})$.
The map
\[
\begin{array}{rcl}
\upsilon:k(X)^*&\rightarrow &\zz^r \\
f&\mapsto&(k_1,...,k_r)
\end{array}
\]
is a valuation of rank $\le r$.\\
Note: $\upsilon$ depends on the choice of the divisors $H_1,...,H_r$ as well as on the choice of the uniformizing parameters $\pi_1,...,\pi_r$.

\subsection{Example}\label{mex}
Let $X=\mathbb{A}^n$. We can simply take $H_1=\{x_1-c_1=0\},$\\$H_2=\{x_1-c_1=x_2-c_2=0\},...,H_n=\{x_1-c_1=...=x_n-c_n=0\}$ as our divisors, where $x_1,...,x_n$ are the coordinate functions and $c_1,...,c_n\in k$. For uniformizing parameters we choose $\pi_i=x_i-c_i$. A rational function $f$ can be written as 
\[
f=(x_1-c_1)^{k_1}\cdots(x_n-c_n)^{k_n}u
\]
where $u$ is not divisible by any of the $(x_i-c_i)$. The valuation just assigns to $f$ the order of vanishing at $x_i=c_i,\;i=1,..,n$. For a variety $Y$ whose function field is a subfield of $k(X)$ one can of course restrict this valuation to $k(Y)$. This way of constructing a valuation will be often used. \\
Note: If $c_1=...=c_n=0$ and $f$ is a Laurent polynomial $f\in k[x_1^{\pm 1},...,x_n^{\pm 1}]$, the valuation gives the lowest exponent $(e_1,...,e_n)\in\zz^n$ (with respect to the lexicographic order of $\zz^n$) occurring in $f$. The highest exponent is often denoted by \textbf{in}$(f)$ and is of interest for example studying Gr\"obner and SAGBI bases. 
We have
\[
\textbf{in}(f)=-\upsilon(f(x_1^{-1},...,x_n^{-1}))
\]

\subsection{Uniformizing Parameters}
Now what happens if we choose different uniformizing parameters, say $\pi_1^\prime,..,\pi_r^\prime$?
Let $f\in k(X)^*$ be a rational function,
\[
f=\pi_1^{k_1}\cdots\pi_r^{k_r}u=\pi_1^{\prime\,k_1^\prime}\cdots\pi_r^{\prime\, k_r^\prime}u^\prime
\]
We are interested in how the two valuations 
$\upsilon(f)=(k_1,...,k_r)$ and $\upsilon^\prime(f)=(k_1^\prime,...,k_r^\prime)$ differ.
\begin{prop}\label{up}
There is a upper triangular matrix $A\in\mbox{GL}_r(\zz)$ with ones on the diagonal, such that 
$\upsilon^\prime(f)=\upsilon(f)\cdot A$ for all $f\in k(X)^*$.
\end{prop}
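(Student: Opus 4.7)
The plan is to induct along the chain $H_0 \supset H_1 \supset \cdots \supset H_r$, tracking how the intermediate residues used to compute $\upsilon(f)$ transform under the change of parameters. Setting $w_j := \pi_j/\pi'_j$, since both $\pi_j$ and $\pi'_j$ are uniformizers for $\upsilon_{H_j}$, the element $w_j$ is a unit in $\mathcal{O}_{H_{j-1}, H_j}$ and therefore has a well-defined nonzero residue in $k(H_j)^{\ast}$ via (\ref{iso}).

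First I would observe that $k_1 = \upsilon_{H_1}(f)$ is independent of any choice of parameter, so $k'_1 = k_1$; this pins down the top row of the desired matrix $A$. Next I would define iteratively $f_0 = f$ and let $f_i \in k(H_i)^{\ast}$ be the residue of $f_{i-1}/\pi_i^{k_i}$ (which lies in $\mathcal{O}_{H_{i-1}, H_i}^{\ast}$), and likewise $f'_i$. The core of the argument is an inductive claim of the form
\[
f'_i = f_i \cdot \prod_{j \le i} \omega_{j,i}^{\,k_j} \qquad \text{in } k(H_i)^{\ast},
\]
for certain elements $\omega_{j,i} \in k(H_i)^{\ast}$ depending only on the parameters, not on $f$. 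The inductive step is a direct algebraic manipulation: substitute $\pi_i = w_i\,\pi'_i$ into $f'_{i-1}/(\pi'_i)^{k'_i}$ and rearrange to
\[
(f_{i-1}/\pi_i^{k_i}) \cdot w_i^{k_i} \cdot \prod_{j<i} \bigl(\omega_{j,i-1}/(\pi'_i)^{a_{ji}}\bigr)^{k_j},
\]
where $a_{ji} := \upsilon_{H_i}(\omega_{j,i-1})$, so that each factor is a unit in $\mathcal{O}_{H_{i-1}, H_i}$ and its residue in $k(H_i)^{\ast}$ contributes the new $\omega_{j,i}$.

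Applying $\upsilon_{H_i}$ to the identity one index earlier yields
\[
k'_i = k_i + \sum_{j < i} a_{ji}\,k_j,
\]
so setting $A_{ii} = 1$, $A_{ji} = a_{ji}$ for $j < i$, and $A_{ji} = 0$ for $j > i$ produces an upper triangular integer matrix with ones on the diagonal satisfying $\upsilon'(f) = \upsilon(f)\cdot A$ for every $f \in k(X)^{\ast}$. Its determinant is $1$, so $A \in \mathrm{GL}_r(\zz)$, as required.

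The main obstacle will be the bookkeeping in the inductive step: verifying that the intermediate quantities $\omega_{j,i-1}/(\pi'_i)^{a_{ji}}$ really are units at $H_i$ in $H_{i-1}$, which is exactly the compatibility $a_{ji} = \upsilon_{H_i}(\omega_{j,i-1})$ built into the definition. This is a straightforward check rather than a deep point, but it has to be carried through carefully for the induction to close. Once it is in place, the upper triangularity of $A$, its unit diagonal, and its membership in $\mathrm{GL}_r(\zz)$ all follow immediately from the form of the recursion.
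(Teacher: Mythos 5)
Your argument is correct and takes essentially the same route as the paper's proof: both induct along the chain $H_0\supset\cdots\supset H_r$, verify at each step that the correction factors are units at $H_i$ so the restrictions to $k(H_i)^*$ make sense, and read off the entry $A_{j,i}$ as the valuation of the restricted correction factor (your $a_{ji}=\upsilon_{H_i}(\omega_{j,i-1})$ is exactly the paper's $A_{j,i}=\upsilon_{H_j}\bigl(\pi_j\prod_{l<i}(\pi'_l)^{-A_{j,l}}|_{H_{i-1}}\bigr)$ after unwinding the recursion). Your version packages the same telescoping computation more transparently by naming the intermediate residues $f_i$, $f'_i$, $\omega_{j,i}$, but the mathematical content is identical.
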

\begin{proof}
Define $A=(A_{i,j})$ by 
\[
\begin{array}{ll}
A_{i,j}=0&j<i\\
A_{i,i}=1&\\
A_{i,j}=\upsilon_{H_j}\left(\pi_i\cdot\prod_{l=1}^{j-1}(\pi_l^\prime)^{-A_{i,l}}|_{H_{j-1}}\right)&j=i+1,...,r
\end{array}
\]
To see that this is well defined we need to show that $\pi_i\cdot\prod_{l=1}^{j-1}(\pi_l^\prime)^{-A_{i,l}}|_{H_{j-1}}$ is a non-zero rational function in $H_{j-1}$. Assume by induction that
$\pi_i\cdot\nolinebreak\prod_{l=1}^{j-1}(\pi_l^\prime)^{-A_{i,l}}$ is defined at some point and non zero in $H_{j-2}\supset H_{j-1}$. Then
\[
\upsilon_{H_{j-1}}\left(\pi_i\cdot\prod_{l=1}^{j-2}(\pi_l^\prime)^{-A_{i,l}}\cdot\left.(\pi_{j-1}^\prime)^{-A_{i,j-1}}\right|_{H_{j-2}}\right)=A_{i,j-1}-A_{i,j-1}\upsilon_{H_{j-1}}(\pi_{j-1}^\prime)=0
\]
and so $\pi_i\cdot\prod_{l=1}^{j-1}(\pi_l^\prime)^{-A_{i,l}}$ is defined at some point and non zero in $H_{j-1}$.
Now for $f\in k(X)^*, k_1=\upsilon_{H_1}(f)=k_1^\prime$ so 
$k_1^\prime=k_1A_{1,1}$.
By induction assume $k_j^\prime=\sum_{l=1}^jk_lA_{l,j}, j=1,...,i$. Then
\[
\pi_{i+1}^{\prime\,k_{i+1}^\prime}\cdots\pi_r^{\prime\, k_r^\prime}u^\prime=
\pi_1^{k_1}\cdots\pi_r^{k_r}u(\pi_1^{\prime\,k_1^\prime}\cdots\pi_i^{\prime\, k_i^\prime})^{-1}=
\]
\[
=\pi_1^{k_1}\cdots\pi_r^{k_r}u(\pi_1^{\prime\,k_1A_{1,1}}\cdots\pi_i^{\prime\, \sum_{l=1}^ik_lA_{l,i}})^{-1}=
\]
\[
=[\pi_1\pi_1^{\prime(-A_{1,1})}\cdots\pi_i^{\prime(-A_{1,i})}]^{k_1}\cdots
[\pi_i\pi_i^{\prime(-A_{i,i})}]^{k_i}
\pi_{i+1}^{k_{i+1}}\cdots\pi_r^{k_r}u
\]
And so
\[
k_{i+1}^\prime=\upsilon_{H_{i+1}}(\left.\pi_{i+1}^{\prime\,k_{i+1}^\prime}\cdots\pi_r^{\prime\, k_r^\prime}u^\prime\right|_{H_i})=
\]
\[
=
k_1\cdot\upsilon_{H_{i+1}}\left(\pi_1\cdot\prod_{l=1}^{i}\left.(\pi_l^\prime)^{-A_{1,l}}\right|_{H_i}\right)+...+
k_i\cdot\upsilon_{H_{i+1}}\left(\pi_i\cdot\prod_{l=i}^{i}\left.(\pi_l^\prime)^{-A_{i,l}}\right|_{H_i}\right)+
\]
\[
+\upsilon_{H_{i+1}}(\pi_{i+1}^{k_{i+1}}|_{H_i})
=\sum_{l=1}^{i+1}k_lA_{l,i+1}
\]
\end{proof}
\subsection{Convex Subgroups}
We will now show that the rank of a valuation
can be interpreted as the number of convex subgroups of the valuation group.
This interpretation will be helpful to us in the sequel.
\begin{defn}
A subgroup $H\subset G$ is \textit{convex} (or \textit{isolated}) if for any $h\in H$ and $g\in G$
\[
\mbox{if }h\ge g\ge 0 \Rightarrow g\in H
\]
\end{defn}
Two convex subgroups of a group $G$ have the property that one is completely contained in the other, and all convex subgroups
$\{G_i\}$ can be arranged as
\[
G\supset G_1\supset...\supset G_m=0
\]
(See \cite[17.1.Thm.4\&Thm.5]{HP})\\
\begin{lem}
The maximal number of (proper) distinct convex subgroups is equal to the rank of $G$.
\end{lem}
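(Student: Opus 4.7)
The plan is to prove the equality by establishing both inequalities separately: any chain of proper distinct convex subgroups has length at most $n=\rank G$, and a chain realizing this length exists.

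For the upper bound, let $G=G_0\supsetneq G_1\supsetneq\cdots\supsetneq G_m=0$ be any chain of convex subgroups. Convexity passes to containing subgroups, so each $G_i$ is convex in $G_{i-1}$; this is exactly what is needed for the induced relation on $G_{i-1}/G_i$ to descend to a well-defined total ordering (otherwise one could find two representatives of the same coset on opposite sides of $0$). As a nontrivial ordered abelian group this quotient is torsion-free, and as a quotient of a subgroup of $G\cong\zz^n$ it is also finitely generated, so $G_{i-1}/G_i\cong\zz^{r_i}$ with $r_i\geq 1$. Rank additivity in the short exact sequences $0\to G_i\to G_{i-1}\to G_{i-1}/G_i\to 0$ then yields $n=\sum_{i=1}^{m}r_i\geq m$.

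For the lower bound I would exhibit a chain of length $n$. In the setting of section \ref{val} the value group is $\zz^r$ with the lexicographic order, and the standard flag in which $\zz^i\subset\zz^r$ is the subgroup of elements supported on the last $i$ coordinates furnishes $r+1$ convex subgroups with successive quotients all isomorphic to $\zz$, matching the rank. More abstractly, one can try induction on $n$: locate a minimal nonzero convex subgroup $C$ (necessarily of rank $1$ by the upper bound), apply the induction hypothesis to $G/C$ (which is ordered because $C$ is convex) and concatenate the chains.

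The main obstacle is the lower bound in full generality: a minimal nonzero convex subgroup need not exist in an arbitrary totally ordered $\zz^n$ with $n\geq 2$, since archimedean orderings (for instance the pullback of $\rr$ along $\zz^2\hookrightarrow\rr$, $(a,b)\mapsto a+b\sqrt{2}$) admit no proper nontrivial convex subgroups. The lemma must therefore be read in the context of section \ref{val}, where the value groups arise lexicographically from a chain of divisors and the standard flag above provides the required chain directly.
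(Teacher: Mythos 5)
Your upper bound argument is correct and in fact tighter than what the paper offers: passing to the quotients $G_{i-1}/G_i$, noting each is a nontrivial torsion-free finitely generated abelian group because $G_i$ is convex in $G_{i-1}$, and summing ranks along the chain rigorously yields ``at most $n$ proper convex subgroups'' for any total ordering whatsoever. The paper's proof of that direction instead picks generators $\alpha_1>\cdots>\alpha_n>0$ and then asserts, without justification, that the resulting identification of $G$ with $\zz^n$ makes the order lexicographic; the entire rest of its argument (both inclusions) is carried out under that assumption.

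Your objection to the lower bound is a genuine gap, not a misreading. The claim ``maximal number of proper convex subgroups $=\rank G$'' is false for a general totally ordered group $G\cong\zz^n$: your example, the pullback of the archimedean order on $\rr$ along $(a,b)\mapsto a+b\sqrt 2$, gives a totally ordered $\zz^2$ with no proper nontrivial convex subgroup at all. Choosing a decreasing generating set does not repair this, because that embedding of $\zz^2$ into $\rr$ is order-dense and cannot be made lexicographic by a change of $\zz$-basis. So the lemma conflates the classical \emph{rank} of a valuation (the number of proper convex subgroups, equivalently the Krull dimension of the valuation ring) with the \emph{rational rank} $\dim_\qq(G\otimes\qq)$; these agree for the lexicographic value groups produced in section \ref{val} from a chain of divisors, which is the case the paper actually needs, but not in general. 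One should also note that the gap propagates: the proof of Theorem \ref{mthm} invokes this lemma to produce a chain of $s$ prime ideals in the valuation ring, and that step fails for an archimedean ordering, although the statement of Theorem \ref{mthm} is still true (it is the rational-rank Abhyankar inequality, and the paper does cite an independent proof in \cite{HP}). In short: your proposal correctly proves what is provable, correctly identifies why the other inequality is unprovable as stated, and correctly localizes the hypothesis (lexicographic, i.e.\ value groups from divisor chains) under which the lemma becomes true.
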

\begin{proof}
Let $G$ be of rank $n$ and $\alpha_1>\alpha_2>...>\alpha_n$ be generators, so that
\[
G=\zz\alpha_1\oplus...\oplus\zz\alpha_n
\]
Set 
\[
G_i=\zz\alpha_{i+1}\oplus...\oplus\zz\alpha_n\quad i=1,...,n
\]
If we identify $G$ with $\zz^n$ then the induced ordering on $\zz^n$ is lexicographic.\\
If $h=(0,...,0,h_{i+1},...,h_n)\in G_i, g=(g_1,...,g_n)$ such that 
$$h\ge g\ge0\Rightarrow
0=h_j\ge g_j\ge0, j=1,...,i\Rightarrow g\in G_i$$
and so the $G_i$ are distinct convex subgroups.\\
On the other hand if there is a convex subgroup $H, G_{i-1}\supset H \supsetneq G_i$, then there is a $h\in H$,
\[
h=(0,...,0,h_i,h_{i+1},...,h_n)\quad h_i\ne 0
\]
\[
\Rightarrow h\ge(0,...,1_i,0,...,0)\ge 0\Rightarrow (0,...,1_i,0,...,0)\in H \Rightarrow H=G_{i-1}
\]
Therefore there can not be more than $n$ convex subgroups.
\end{proof}

\section{Main Theorems}
In this section we will use valuation theory to prove lower bounds on $\trdeg K$ of any field $K/k$ and as a corollary, for $\dim f(X)$, where $f:X\dashrightarrow \mathbb{A}^n$ is a given rational map of $k-$varieties. This will be a source for many interesting applications.\\
Theorem \ref{mthm} is the general statement of the inequality. Another proof for it can be found in \cite[12.4. Theorem II]{HP}. Theorem \ref{mthm1} is essentially the same statement but for the case of a function field of a variety with a valuation given by a chain of divisors as in \ref{val}. In this case one can explicitely construct algebraically independent elements.
\begin{thm}\label{mthm}
Let $K$ be a field extension of $k$, $\upsilon:K^*\rightarrow G$ a valuation (that is trivial on $k$). Then $\trdeg K\geq\rank \upsilon$.
\end{thm}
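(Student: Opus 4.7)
The plan is to exhibit $r := \rank G$ elements of $K$ that are algebraically independent over $k$, which will immediately give $\trdeg_k K \geq r$. Since $G$ is a finitely generated ordered abelian group it is torsion-free, so $G \cong \zz^r$, and by surjectivity of $\upsilon$ I can choose $x_1,\dots,x_r \in K^*$ with $\upsilon(x_1),\dots,\upsilon(x_r)$ forming a $\zz$-basis of $G$ (for instance, pull back the standard basis $e_1,\dots,e_r$).

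The whole proof then reduces to showing these $x_i$ are algebraically independent over $k$. Suppose for contradiction that there is a nonzero polynomial $P = \sum_{\alpha} c_\alpha y^\alpha \in k[y_1,\dots,y_r]$ with $P(x_1,\dots,x_r) = 0$. Look at the valuations of the individual monomials: since $c_\alpha \in k^*$ satisfies $\upsilon(c_\alpha) = 0$, the multiplicativity of $\upsilon$ gives
\[
\upsilon(c_\alpha x_1^{\alpha_1}\cdots x_r^{\alpha_r}) = \sum_{i=1}^r \alpha_i\,\upsilon(x_i).
\]
Because $\upsilon(x_1),\dots,\upsilon(x_r)$ are $\zz$-linearly independent in $G$, the valuations of distinct monomials in $P$ are \emph{distinct} elements of $G$.

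Now I invoke the standard strengthening of the ultrametric inequality noted right after the definition of valuation: if finitely many nonzero elements of $K^*$ have pairwise distinct valuations, then the valuation of their sum equals the minimum of their valuations. Applied to the monomials appearing in $P(x_1,\dots,x_r)$, this forces $\upsilon\bigl(P(x_1,\dots,x_r)\bigr) = \min_\alpha \sum_i \alpha_i \upsilon(x_i)$, a \emph{finite} element of $G$. But $P(x_1,\dots,x_r) = 0$, whose valuation is $\infty$; this contradiction establishes the algebraic independence.

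The main obstacle, such as it is, lies in justifying the strong triangle inequality for arbitrary finite sums of elements with pairwise distinct valuations. This is a short induction on the number of terms using axiom (ii) and the remark that equality holds there whenever $\upsilon(x)\neq\upsilon(y)$, so it is entirely routine. Everything else in the argument is essentially bookkeeping on top of the surjectivity of $\upsilon$ and the torsion-freeness of ordered abelian groups.
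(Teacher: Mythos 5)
Your proof is correct, and it takes a genuinely different route from the paper's. The paper proves Theorem~\ref{mthm} by passing through the Krull dimension of the valuation ring $R$: it uses the chain of convex subgroups $G=G_0\supset G_1\supset\cdots\supset G_s=0$ to build a chain of prime ideals $\mathfrak{m}_0\subsetneq\cdots\subsetneq\mathfrak{m}_s$ in $R$, concludes $\dim R\ge s$, and then appeals to the fact that the transcendence degree of the fraction field of $R$ is at least $\dim R$. Your argument bypasses the valuation ring altogether: you pull back a $\zz$-basis of $G\cong\zz^r$ to elements $x_1,\dots,x_r\in K^*$, observe that distinct monomials $x^\alpha$ then have distinct valuations (since the $\upsilon(x_i)$ are $\zz$-linearly independent and $\upsilon$ is trivial on $k$), and invoke the sharpened ultrametric inequality to see that a nonzero polynomial in the $x_i$ has a finite valuation and hence cannot vanish. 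Your approach is more elementary and self-contained, and it has the advantage of producing the $r$ algebraically independent elements explicitly --- in that respect it is closer in spirit to the paper's constructive Theorem~\ref{mthm1} than to the proof of Theorem~\ref{mthm} itself. It also sidesteps the slightly delicate step ``$\trdeg K\ge\dim R$'' for a (typically non-Noetherian, non-finitely-generated) valuation ring $R$, which the paper states without justification. The one small remaining task, as you note, is the finite induction establishing that a sum of elements with pairwise distinct valuations is nonzero with valuation equal to the minimum; that induction is routine and you correctly identify the two ingredients (axiom (ii) and the equality case when $\upsilon(x)\ne\upsilon(y)$).
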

\begin{proof}
Suppose $\rank \upsilon=s$ i.e. $G=\upsilon(K)$ is of rank $s$. 
Let $\{G_i\}$ be the convex subgroups of $G$, so that
\[
G=G_0\supset G_1\supset...\supset G_s=0.
\]
Let $R$ be the valuation ring 
\[
R=\{f\in K\mid \upsilon(f)\ge 0\}\cup\{0\}
\]
and let 
\[
\mathfrak{m}_i=\{f\in R\mid\upsilon(f)\notin G_i\}
\]
We show that $\mathfrak{m}_i$ is a prime ideal.
If $f\in\mathfrak{m}_i, g\in R$ then $\upsilon(fg)=\upsilon(f)+\upsilon(g)\ge\upsilon(f)\ge 0$ Now since $G_i$ is convex and $\upsilon(f)\notin G_i$, it follows that $\upsilon(fg)\notin G_i$ and so $fg\in\mathfrak{m}_i$. If $f,g\in\mathfrak{m}_i\Rightarrow \upsilon(f+g)\ge\min\{\upsilon(f),\upsilon(g)\}\ge 0$ and hence $f+g\in\mathfrak{m}_i$. Thus $\mathfrak{m}_i$ is an ideal in $R$.\\
If $f,g\notin\mathfrak{m}_i\Rightarrow \upsilon(f), \upsilon(g)\in G_i\Rightarrow \upsilon(fg)\in G_i, fg$ cannot be in $\mathfrak{m}_i$ and so $\mathfrak{m}_i$ is prime.\\
Now $\mathfrak{m}_i\subsetneq\mathfrak{m}_{i+1}$ (note that $\upsilon$ is surjective) and we have a chain of prime ideals
\[
\{0\}=\mathfrak{m}_0\subsetneq\mathfrak{m}_1\subsetneq...\subsetneq\mathfrak{m}_{s}
\]
which means $\dim R\ge s$\\
The quotient field $K$ of $R$ must have at least $\dim R$ algebraically independent elements, so
\[
\trdeg K\ge\dim R\ge s
\]
\end{proof}
\begin{cor}\label{mcor}
Let $f=(f_1,...,f_n):X\dashrightarrow Y\subset \mathbb{A}^n$ be a dominant rational map of $k-$varieties and $\upsilon:k(X)^*\rightarrow\zz^r$ a valuation. Denote $\upsilon(f)=$\footnotesize$\left(\begin{array}{c}\upsilon(f_1)\\ : \\ \upsilon(f_n)\end{array}\right)$\normalsize$\in M_{n,r}(\zz)$. Then $\dim Y\ge \rank \upsilon(f)$.
\end{cor}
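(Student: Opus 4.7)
The plan is to reduce the statement directly to Theorem \ref{mthm} by restricting $\upsilon$ to the subfield of $k(X)$ corresponding to $Y$ via pullback along $f$.

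First, since $f:X\dashrightarrow Y$ is dominant, it induces an inclusion of function fields $f^*:k(Y)\hookrightarrow k(X)$, so that we may view $k(Y)^*$ as a subgroup of $k(X)^*$. I would then restrict the valuation $\upsilon$ to this subfield to obtain $\upsilon':=\upsilon|_{k(Y)}:k(Y)^*\rightarrow \upsilon(k(Y)^*)\subset\zz^r$, which by the discussion before Definition of rank is again a valuation (trivial on $k$) onto an ordered subgroup of $\zz^r$. Applying Theorem \ref{mthm} to $\upsilon'$ yields
\[
\trdeg k(Y)\ge \rank \upsilon'=\rank \upsilon(k(Y)^*).
\]
Since $\trdeg k(Y)=\dim Y$, it suffices to show that $\rank \upsilon(k(Y)^*)\ge\rank\upsilon(f)$.

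For the last inequality, let $y_1,\ldots,y_n$ be the coordinate functions on $\mathbb{A}^n$ restricted to $Y$. Then $f^*(y_i)=f_i$, so each $\upsilon(f_i)=\upsilon'(y_i)$ lies in the subgroup $\upsilon(k(Y)^*)\subset\zz^r$. Hence the subgroup of $\zz^r$ generated by the rows of the matrix $\upsilon(f)$ is contained in $\upsilon(k(Y)^*)$. The rank of a matrix over $\zz$ equals the rank of the subgroup generated by its rows, so
\[
\rank \upsilon(f)\le \rank \upsilon(k(Y)^*),
\]
and chaining the inequalities gives $\dim Y\ge \rank \upsilon(f)$.

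There is essentially no serious obstacle here; the one point to check cleanly is that the restriction of a valuation to a subfield is again a valuation of rank equal to the rank of the image subgroup, and that the matrix rank coincides with the rank of the abelian group generated by the rows. Both are standard and already implicit in the preliminaries, so the proof should be just a few lines.
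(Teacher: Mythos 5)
Your proof is correct and follows essentially the same route as the paper: restrict $\upsilon$ to $k(Y)^*\hookrightarrow k(X)^*$, apply Theorem \ref{mthm} to get $\dim Y=\trdeg k(Y)\ge\rank\upsilon(k(Y)^*)$, and compare with $\rank\upsilon(f)$. You are in fact slightly more careful than the paper, which writes $\rank\upsilon(f(X))=\rank\upsilon(f)$ where only the inequality $\rank\upsilon(k(Y)^*)\ge\rank\upsilon(f)$ (which you justify via the row subgroup) is needed and true in general.
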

\begin{proof}
Look at the induced valuation on $k(Y)^*\subset k(X)^*$. $\dim Y =\dim f(X)=\trdeg f(X)\ge \rank \upsilon(f(X))=\rank \upsilon(f)$.
\end{proof}
If the field $K$ is the function field of a variety and the valuation given by divisors, we can restate Theorem \ref{mthm} and give a constructive proof:
\begin{thm}\label{mthm1}
Let $X$ be a normal irreducible variety, $\upsilon:k(X)^*\rightarrow\zz^n$ a valuation constructed as in \ref{val} with local parameters $\pi_1,...,\pi_n$ and $f_1,..,f_r \in k(X)^*$ be rational functions. Denote V=\footnotesize$\left(\begin{array}{c}\upsilon(f_1)\\ : \\ \upsilon(f_r)\end{array}\right)$\normalsize$\in M_{r,n}(\zz)$. \\
Then $\trdeg k(f_1,...,f_r)\geq\rank V$.
\end{thm}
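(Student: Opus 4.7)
The plan is to show that among $f_1,\ldots,f_r$ one can directly exhibit $s=\rank V$ algebraically independent functions, using only the basic property that a valuation detects a unique ``leading term'' when the relevant values in $\zz^n$ are distinct. Theorem \ref{mthm} already gives the inequality abstractly by restricting $\upsilon$ to $F=k(f_1,\ldots,f_r)^*$; the point of Theorem \ref{mthm1} is to make the independent elements explicit, so I want a constructive argument instead of invoking the prime-ideal chain from the previous proof.

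First I would pick out $s$ rows of $V$ that are $\zz$-linearly independent in $\zz^n$. Since $\rank V=s$ and $\zz^n$ is torsion-free, such a set exists (linear independence over $\zz$ in a torsion-free group coincides with $\qq$-linear independence after tensoring, and the matrix $V$ has $\qq$-rank $s$). After relabelling, assume $\upsilon(f_1),\ldots,\upsilon(f_s)$ are $\zz$-linearly independent. The claim then is that these $s$ functions are algebraically independent over $k$.

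The core step is the following standard valuation-theoretic observation: if $P(y_1,\ldots,y_s)=\sum_{\alpha\in S}c_\alpha y^\alpha\in k[y_1,\ldots,y_s]$ is a nonzero polynomial with $c_\alpha\in k^*$ and $S$ finite, then the valuations $\upsilon(c_\alpha f^\alpha)=\sum_i \alpha_i\upsilon(f_i)$ for $\alpha\in S$ are pairwise distinct, because any equality $\sum_i\alpha_i\upsilon(f_i)=\sum_i\beta_i\upsilon(f_i)$ would force $\alpha=\beta$ by the $\zz$-linear independence of $\upsilon(f_1),\ldots,\upsilon(f_s)$ (and $\upsilon$ kills the coefficients $c_\alpha\in k^*$). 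Hence there is a unique $\alpha^*\in S$ minimising $\upsilon(c_\alpha f^\alpha)$, and iterating the equality clause in axiom (ii) of a valuation gives
\[
\upsilon\bigl(P(f_1,\ldots,f_s)\bigr)=\upsilon(c_{\alpha^*}f^{\alpha^*})=\sum_i\alpha^*_i\upsilon(f_i)\in\zz^n,
\]
which is finite. If $P(f_1,\ldots,f_s)=0$ this would contradict $\upsilon(0)=\infty$, so no such $P$ exists.

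The main obstacle is cosmetic rather than conceptual: one has to be a little careful when choosing the ``linearly independent rows'' to ensure it is $\zz$-independence (the condition actually needed to separate the valuations of distinct monomials), and to invoke the equality case of the ultrametric inequality correctly for sums of more than two terms. Both points are routine. Once the $s$ independent $f_i$ are identified, the argument above gives $\trdeg k(f_1,\ldots,f_r)\ge\trdeg k(f_1,\ldots,f_s)=s=\rank V$, and the functions realising the bound are produced explicitly from the matrix $V$.
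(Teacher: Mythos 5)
Your proof is correct, and it takes a genuinely different route from the paper's. The paper does not pick out a subset of the $f_i$ directly; instead it forms monomials $g_j=\prod_i f_i^{\Lambda_{ji}}$, with $\Lambda$ built from a scaled inverse of an invertible $s\times s$ block of $V$, so that the valuation matrix $\bigl(\upsilon(g_j)\bigr)_j$ is in echelon shape $(0,\dots,0,\lambda,*,\dots,*)$ with the $\lambda$ on the diagonal. It then proves $g_1,\dots,g_s$ are algebraically independent by a geometric induction: writing $g_j=\pi_j^\lambda u_j$, one restricts a putative relation $P(g_1,\dots,g_s)=0$ to the divisor $H_1$, where $g_1$ vanishes and $g_2,\dots,g_s$ remain nonzero rational functions, and iterates down the chain $H_1\supset H_2\supset\cdots$, using irreducibility of $P$ to drive the contradiction. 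This deliberately exploits the chain-of-divisors presentation of $\upsilon$, which the surrounding text flags as the point of Theorem \ref{mthm1} versus Theorem \ref{mthm}.

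Your argument instead selects $s$ of the $f_i$ whose values $\upsilon(f_i)\in\zz^n$ are $\zz$-linearly independent (equivalent to $\qq$-independence since $\zz^n$ is torsion-free, so such a subset exists whenever $\rank V=s$), and runs the standard ``unique leading term'' computation: the values of the finitely many monomials of a nonzero $P$ are pairwise distinct, so there is a unique minimal one in the totally ordered group $\zz^n_{\mathrm{lex}}$, and by repeated use of the equality case of the ultrametric inequality $\upsilon\bigl(P(f_1,\dots,f_s)\bigr)$ is finite, hence $P(f_1,\dots,f_s)\neq 0$. This is a correct proof; in fact it never uses that $\upsilon$ arises from a chain of divisors, so it is simultaneously a constructive proof of the more general Theorem \ref{mthm} for value group $\zz^n$ (it is the usual Abhyankar-type rational-rank argument). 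What you gain is simplicity, generality, and that the explicit independent elements are a subset of the given $f_i$ rather than monomials in them; what the paper's argument buys is an explicit normalization of the valuation matrix and a proof that visibly lives in the geometry of the divisor chain, which the author evidently wanted to showcase.
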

\begin{proof}
Suppose $\rank V=s$ ($s\leq r)$. For notational simplicity assume $V=\left(\begin{array}{c|c}\tilde{V}&*\\\hline *&*\end{array}\right)$ where $\tilde{V}\in\zz^{s\times s}$ and det \nolinebreak$\tilde{V}\ne0$. Take its $\qq-$inverse and multiply it by a suitable $\lambda \in \nn$ such that $\lambda\tilde{V}^{-1}\in\zz^{s\times s}$ and $ \lambda\tilde{V}^{-1}\tilde{V}=\lambda I$. Set $\Lambda=\left(\begin{array}{c|c}\lambda\tilde{V}^{-1}&0\end{array}\right)\in
\zz^{s\times r}$.
Consider the functions 
\[
g_j=f_1^{\Lambda_{j1}}\hdots f_r^{\Lambda_{jr}}\in k^*(X), \quad j=1,..,s.
\]
\[
\upsilon(g_j)=\upsilon(f_1^{\Lambda_{j1}}\hdots f_r^{\Lambda_{jr}})=\sum_{l=1}^r\Lambda_{jl}\upsilon(f_l)=
\]
\[
=(\Lambda_{j1},...,\Lambda_{jr})\cdot V=(0,..,0,\lambda,0,..,0,*,...,*)
\]
where the lambda is at the $j$th position.
Thus $g_j={\pi_j}^\lambda \cdot u$ for some unit $u|_{H_{s-1}} \in \mathcal{O}_{H_{s-1},H_s}$.\\
Note that $g_1|_{H_1}\equiv0$ since $\upsilon_{H_1}(g_1)=\lambda>0$ and $g_2,..,g_s$ are units in $\mathcal{O}_{X,H_1}$ hence non-zero rational functions in $H_1$.\\
Suppose now that $g_1,..,g_s$ are algebraically dependent over $k$, i.e. there exists an irreducible polynomial $P(t_1,..,t_s)\in k[t_1,..,t_s], P\ne0$ and $$P(g_1,..,g_s)=0\quad \mbox{on }X$$ 
Then
$$0=P(g_1|_{H_1},..,g_s|_{H_1})=P(0,g_2|_{H_1},..,g_s|_{H_1})$$ 
If $P(0,t_2,..,t_s)\not\equiv 0$ then $g_2|_{H_1},..,g_s|_{H_1}$ are algebraically dependent, otherwise by irreducibility $P=ct_1, \quad c \in k^*$ and therefore $g_1\equiv0$ on $X=H_0$ which is what we excluded by hypothesis.
Inductively we can conclude that $g_j|_{H_{j-1}}\equiv0$ for some $j\le s$ but by construction $g_j|_{H_{j-1}}$ is a non-zero rational function. So we must have $s$ algebraically independent elements $g_1,..,g_s$ in $k(f_1,..,f_r)$, thus $\dim f(X)= \trdeg(f_1,..,f_r)\ge s.$
\end{proof}

Theorem \ref{cthm} will be a short digression. We try to give a converse to Corollary \ref{mcor}: For a variety of dimension $n$ does there exist a valuation of rank $n$ for its function field?
\begin{thm}\label{cthm}
Let $K$ be the function field of a variety of dimension $n$ over $k$ and $f_1,...,f_r$
$(r\le n)$ be algebraically independent elements of $K$. Then there exists a variety $X$ with function field $k(X)=K$ and a valuation $\upsilon: k(X)^*\rightarrow\zz^n$ such that the induced valuation $\upsilon: k(f_1,...,f_r)^*\rightarrow\zz^n$ is of rank $r$.
\end{thm}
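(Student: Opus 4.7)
The idea is to reduce to the purely transcendental case by extending $f_1,\ldots,f_r$ to a full transcendence basis, construct the desired valuation on the resulting purely transcendental subfield using the standard chain of coordinate hyperplanes from Example \ref{mex}, and then extend that valuation to $K$ via classical valuation theory.

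First I would extend $f_1,\ldots,f_r$ to a transcendence basis $f_1,\ldots,f_n$ of $K$ over $k$ (possible since $\trdeg K = n$) and set $F = k(f_1,\ldots,f_n)$. Since $K$ is finitely generated over $k$ of transcendence degree $n$ and $F$ has the same transcendence degree, $K/F$ is a finite algebraic extension, of some degree $d$. On $F$, I would apply the construction of Example \ref{mex} to $\mathbb{A}^n = \mathrm{Spec}\, k[f_1,\ldots,f_n]$, using the chain $H_i = \{f_1 = \cdots = f_i = 0\}$ with local parameters $\pi_i = f_i$. This produces a valuation $\upsilon_F : F^* \to \zz^n$ of rank $n$ with $\upsilon_F(f_i) = e_i$. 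For any $g \in k(f_1,\ldots,f_r)^*$, after factoring out $f_1^{k_1}\cdots f_r^{k_r}$ the remaining unit restricts to a nonzero constant on $H_r$, so the last $n-r$ components of $\upsilon_F(g)$ vanish. Hence $\upsilon_F|_{k(f_1,\ldots,f_r)^*}$ has image exactly $\zz^r \oplus \{0\}^{n-r}$ and rank $r$.

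Next I would extend $\upsilon_F$ to a valuation $\upsilon$ on $K$ using Chevalley's extension theorem: any valuation on a field extends to any algebraic extension. Since $[K:F] = d < \infty$, the value group $G = \upsilon(K^*)$ satisfies $d \cdot G \subset \upsilon_F(F^*) = \zz^n$, so $G$ sits between $\zz^n$ and $\tfrac{1}{d}\zz^n$ with finite index in the latter. Thus $G$ is a finitely generated torsion-free ordered abelian group of rank $n$, hence order-isomorphic to $\zz^n$ after a suitable change of basis. Taking $X$ to be any normal projective variety with function field $K$, the composed valuation $\upsilon: k(X)^* \to \zz^n$ has rank $n$, and its restriction to $k(f_1,\ldots,f_r)^*$ coincides with $\upsilon_F|_{k(f_1,\ldots,f_r)^*}$, so it has rank $r$.

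The main obstacle is the extension step: one invokes Chevalley's theorem to produce an extension of $\upsilon_F$ to $K$, and then must argue that the resulting value group — a priori only a subgroup of the divisible hull of $\zz^n$ — is finitely generated of rank $n$ and hence order-isomorphic to $\zz^n$. Everything else is a routine unpacking of the valuation from Example \ref{mex}.
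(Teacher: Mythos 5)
Your proof is correct and follows essentially the same route as the paper's: extend $f_1,\ldots,f_r$ to a transcendence basis, put the lexicographic (equivalently, hyperplane-chain) valuation of rank $n$ on the purely transcendental subfield $k(f_1,\ldots,f_n)$, and then extend it to $K$ via classical valuation theory for finite extensions, noting that the value group remains finitely generated of rank $n$. Your explicit check that the restriction to $k(f_1,\ldots,f_r)^*$ has image $\zz^r\oplus 0$ is a small improvement in precision over the paper's appeal to Theorem \ref{mthm} at that step, but the overall argument is the same.
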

\begin{proof}
First we can assume $r=n$: If we extend the functions to a transcendence base $f_1,...,f_n$ of $K$ and have a valuation $\upsilon:k(f_1,...,f_n)\rightarrow \zz^n$ of rank $n$, then by Theorem \ref{mthm} the induced valuation on $k(f_1,....,f_r)$ is of rank $r$.
Let $X$ be an affine irreducible variety with $k(X)=K$.\\
Let $K^\prime=k(f_1,...,f_n)$. $K$ is a finitely generated algebraic (i.e. finite) extension of $K^\prime$.\\
Define a valuation $\upsilon^\prime:K^\prime\rightarrow G^\prime=\zz^n$ by setting 
\[
\upsilon^\prime\left(\sum_{i}c_i f^{e_i}\right)=\min_{c_i\ne 0}\{e_i=(e_{i1},...,e_{in})\}\in\zz^n
\]
where $f^{e_i}=f_1^{e_{i1}}\cdots f_n^{e_{in}}$, only finitely many of the $c_i\in k$ are non-zero and $\zz^n$ is ordered lexicographically.\\
Obviously $\upsilon^\prime(f_1),...,\upsilon^\prime(f_n)$ generate $G^\prime=\zz^n$ and $\upsilon^\prime$ is of rank $n$.\\
Then $\upsilon^\prime$ can be extended to a valuation
\[
\upsilon:K\rightarrow G\supseteq G^\prime
\]
which is also of rank $n$ since $K/K^\prime$ is finite (\cite[XII.2,Thm.7\&Thm.10]{HP}).

\end{proof}

\begin{rem}
The question whether there exists a valuation of rank $r$ that is constructed through a chain of divisors as in \ref{val} is more subtle. The main difficulty is to make sure that the divisors are normal. 
\end{rem}

\subsection{Lemma on the Rank of a Matrix}
For later use we write down here two simple Lemmas on the rank of a matrix.
\begin{lem}\label{rk1}
Let $A$ be an integer matrix and $m$ an integer, $m\ne0$.\\
Then $\rank A \ge \rank (A \mod m)$.
\end{lem}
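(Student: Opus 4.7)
The plan is to characterize both ranks in terms of minors and exploit the trivial implication between them. For an integer matrix $A$, $\rank A$ (the usual rank over $\qq$) equals the largest $s$ such that some $s\times s$ minor of $A$ is non-zero as an integer. The most natural reading of $\rank(A\bmod m)$ compatible with the paper's later use is the largest $s$ such that some $s\times s$ minor of $A$ is not divisible by $m$, i.e.\ is non-zero in $\zz/m\zz$. (If the intended reading is the rank over $\mathbb{F}_p$ in the case $m=p$ prime, this coincides with the above, and the argument below is unchanged.)

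With this in hand, the proof is essentially one line. Suppose $s=\rank(A\bmod m)$. Then by definition there is an $s\times s$ submatrix $B$ of $A$ with $\det B\not\equiv 0\pmod m$. A fortiori $\det B\neq 0$ as an integer, so $B$ has rank $s$ over $\qq$. Since a submatrix of $A$ has $\qq$-rank $s$, we conclude $\rank A\geq s$, as required.

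The only delicate point is justifying the minor characterization in the modular setting; but the direction we actually need is the easy one. Indeed, the contrapositive is immediate: if every $s\times s$ minor of $A$ vanishes over $\zz$, then every $s\times s$ minor of $A\bmod m$ vanishes in $\zz/m\zz$. So I do not expect any real obstacle; the lemma amounts to the observation that reducing modulo $m$ cannot create non-zero minors that were not already non-zero over $\zz$.
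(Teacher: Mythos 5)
Your proof is correct and takes essentially the same approach as the paper: both characterize rank via nonvanishing minors and observe that a minor vanishing over $\zz$ vanishes mod $m$ (the paper phrases this as ``all $(r{+}1)$-minors of $A$ vanish, hence vanish mod $m$''; you phrase the contrapositive, ``a nonvanishing $s$-minor mod $m$ lifts to a nonvanishing integer minor''). Your remark pinning down the meaning of $\rank(A\bmod m)$ for composite $m$ as ``largest size of a minor not divisible by $m$'' is a helpful clarification that the paper leaves implicit, and it is indeed the reading consistent with how the lemma is invoked later (e.g.\ with $m=d_{l+1}$ not necessarily prime).
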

\begin{proof}
Let $\rank A= r.$ Then every $r+1$ minor of $A$ is zero and so it is$\mod m$. Thus $\rank (A \mod m)\le r$.
\end{proof}
\begin{lem}\label{rk2}
Let $A,B$ be matrices of the same size.
Then \\
i)\quad $\rank (A+B) \le \rank A +\rank B$\\
ii)\quad $\rank (A+B) \ge |\rank A -\rank B|$
\end{lem}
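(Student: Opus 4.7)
The plan is to deduce both inequalities from the elementary dimension bound $\dim(U+V)\le \dim U+\dim V$ for subspaces of a vector space, applied to the column spaces of the matrices involved.

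For part (i), I would observe that the $j$-th column of $A+B$ is the sum of the $j$-th columns of $A$ and $B$, so the column space $\mathrm{col}(A+B)$ is contained in $\mathrm{col}(A)+\mathrm{col}(B)$. Taking dimensions yields
\[
\rank(A+B)=\dim\mathrm{col}(A+B)\le \dim\bigl(\mathrm{col}(A)+\mathrm{col}(B)\bigr)\le \dim\mathrm{col}(A)+\dim\mathrm{col}(B)=\rank A+\rank B.
\]

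For part (ii), I would reduce to (i) by a rewriting trick: since $A=(A+B)+(-B)$ and obviously $\rank(-B)=\rank B$, applying (i) to $A+B$ and $-B$ gives $\rank A\le \rank(A+B)+\rank B$, i.e.\ $\rank(A+B)\ge \rank A-\rank B$. By the symmetric identity $B=(A+B)+(-A)$ one obtains $\rank(A+B)\ge \rank B-\rank A$, and combining the two yields $\rank(A+B)\ge |\rank A-\rank B|$.

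Neither part presents a real obstacle; the only mildly non-obvious step is noticing that (ii) is a formal consequence of (i) after rewriting $A$ as the sum of $A+B$ and $-B$. If a self-contained argument for (ii) is preferred, one can instead pass to the quotient by $\mathrm{col}(B)$ and note that the image of $\mathrm{col}(A+B)$ there contains the image of $\mathrm{col}(A)$, which gives $\rank(A+B)\ge \rank A-\rank B$ directly.
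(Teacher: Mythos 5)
Your proof is correct and follows essentially the same route as the paper: part (i) via the containment $\mathrm{Im}(A+B)\subset\mathrm{Im}(A)+\mathrm{Im}(B)$, and part (ii) by the same substitution trick of applying (i) to $A+B$ and $-B$.
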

\begin{proof}
i)\quad 
Clearly $\rm{Im} (A+B)\subset \rm{Im} (A) +\rm{Im} (B)$. It follows that $\rank (A+B) \le \rank A +\rank B$\\
ii)\quad Assume that $\rank A\ge \rank B$. ii) then follows from i) by replacing $A$ by $A+B$ and $B$ by $-B$.
\end{proof}

\section{Orbits of Torus Actions}\label{ota}
Now we turn our attention towards essential dimension.
We will look at the action of an $m-$dimensional torus $T$ on $K^n$ where $K$  is a field extension of $k$. The orbits of this action are the objects whose essential dimension we are interested in. All information of the action is stored in a matrix $E$ and the functor of orbits will be denoted by $\mathcal{F}_E$. We will recall the definition of $K-$points and the generic point $x$ of $K^n$. We will compute the essential dimension of the orbit of $x$ and thus get the essential dimension of $\mathcal{F}_E$ by showing that $\ed[x]=\ed(\mathcal{F}_E)$.
\subsection{Elliptic Curves}\label{elcex}
As a motivating example consider the set of elliptic curves over $K$
\[
\{(x,y)\in K^2 \mid y^2=x^3+ax+b; a,b\in K\}
\]
Two curves $\{(x,y)\},\{(x^\prime,y^\prime)\}$ are isomorphic if there exists a $\lambda \in K^*$ such that $x=\lambda^2x^\prime, y=\lambda^3y^\prime$ so that
$\lambda^6{y^\prime}^2=\lambda^6{x^\prime}^3+a\lambda^2x^\prime+b$ or
$${y^\prime}^2={x^\prime}^3+a\lambda^{-4}x^\prime+b\lambda^{-6}$$
 which means $a^\prime=a\lambda^{-4}, b^\prime=b\lambda^{-6}$ (See for example \cite[III.1]{Si}).
\\
Thus we can identify elliptic curves over $K$ with pairs $(a,b)\in K\times K$ where 
$$(a,b)\sim(a^\prime,b^\prime)\iff \exists \lambda \in K^*, a^\prime=a\lambda^4, b^\prime=b\lambda^6.$$
Let 
\[
\mathcal{F}_{El}(K)=(K\times K)/\sim
\]
be the functor that assigns to every field $K$ the elliptic curves defined in $K$.
We will see later, that $\mbox{ed}(\mathcal{F}_{El})=2$.

\subsection{The Functor $\mathcal{F}_E$}\label{tact}
The example of elliptic curves leads us to a more general concept.

For fixed $n,m\in\nn$ consider 
\[
\mathcal{F}_E(K)=K^n/\sim\quad\mbox{where}
\]
\[
(a_1,...,a_n)\sim({a^\prime}_1,...,{a^\prime}_n)\iff \exists \lambda_1,...,\lambda_m \in K^* \mbox{ such that }
\]
\[
\begin{array}{c}a^\prime_1=a_1\lambda_1^{e_{11}}\cdots\lambda_m^{e_{1m}}\\
a^\prime_2=a_2\lambda_1^{e_{21}}\cdots\lambda_m^{e_{2m}}\\
\vdots\\
a^\prime_n=a_n\lambda_1^{e_{n1}}\cdots\lambda_m^{e_{nm}}
\end{array}
\]
The $e_{ij}\in \zz$ are fixed exponents and we write them in a matrix
\[
E=\left(\begin{array}{ccc}e_{11}&\cdots&e_{1m}\\
\\\vdots&&\vdots\\\\
e_{n1}&\cdots&e_{nm}\end{array}\right)\in M_{n,m}(\zz)
\]
Clearly $\ed(\mathcal{F}_E)$ only depends on $E$.

\subsection{$K-$points and the generic point}
The notion of $K-$points gives a more general way of looking at $\mathcal{F}_E$:
\begin{defn}
Let $X$ be a variety over $k$ and $K/k$ a field extension. A \textit{$K-$point} $\xi$ is a rational map $\xi:Y\dashrightarrow X$ where $Y$ is a variety with $k(Y)=K$. The set of $K-$points is denoted by $X(K)$.
\end{defn}
Note: $Y$ is only defined up to to birational isomorphism.
\begin{exs}
1) $K=k$. Then $X(k)=\{\ast\rightarrow X\}=\{\mbox{points of }X\}$\\
2) $K=k(X)$. Then $X(k(X))=\{\phi:X\rightarrow X\}=$ rational maps. $id_X\in X(k(X))$ is called the \textit{generic point} of $X$.\\
3) $X=k^n$. A rational map $\phi:Y\dashrightarrow X$ with $k(Y)=K$ can be viewed as an element in $K^n$, $k^n(K)=K^n$.\\
4) $X=(k^*)^n$. Then $(k^*)^n(K)=(K^*)^n$.\\
\end{exs}
Now let $X=k^n$ and $T=(\mathbf{G}_m)^m=(k^*)^m$. The torus $T(K)=(K^*)^m$ acts on $X(K)=K^n$ through characters
\[
\chi_i(t)=\chi_i(t_1,...,t_m)=t_1^{e_{i1}}\cdots t_m^{e_{im}}\quad i=1,...,n
\]
\[
t\cdot x=(x_1\chi_1(t),...,x_n\chi_n(t))
\]
View now the functor $\mathcal{F}_E$ as 
\[
\mathcal{F}_E(K)=X(K)/\sim
\]
\[
a\sim b\iff a=t\cdot b\;\mbox{ for a }t\in T(K)
\]
\begin{defn}
Let $x=(x_1,...,x_n)$ be the generic point of $k^n$. The \textit{generic point of }$\mathcal{F}_E$ is the orbit $[x]\in\mathcal{F}_E(k(x_1,...,x_n))$.
\end{defn}
Our final goal is to find $\ed(\mathcal{F}_E)$. We begin with a simple bound on $\ed(\mathcal{F}_E)$:
\begin{lem}
$\ed(\mathcal{F}_E)\le n$.
\end{lem}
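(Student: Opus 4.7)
The plan is very short: for any $K/k$ and any orbit $\alpha = [a_1,\dots,a_n] \in \mathcal{F}_E(K) = K^n/{\sim}$, I would simply exhibit a subfield of $K$ of transcendence degree at most $n$ over $k$ from which the orbit descends. The natural candidate is $K_0 = k(a_1,\dots,a_n) \subseteq K$.

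First I would check that $K_0$ has transcendence degree at most $n$: this is immediate since $K_0$ is generated over $k$ by the $n$ elements $a_1,\dots,a_n$, so $\trdeg K_0 \le n$. Next I would observe that the tuple $(a_1,\dots,a_n)$, viewed as an element of $K_0^n = \mathbf{A}^n(K_0)$, maps to $(a_1,\dots,a_n) \in K^n$ under the inclusion $K_0 \hookrightarrow K$, and therefore its orbit class $[a_1,\dots,a_n] \in \mathcal{F}_E(K_0)$ maps to $\alpha$ under $\mathcal{F}_E(K_0) \to \mathcal{F}_E(K)$. By Definition \ref{ED-O} this shows $\ed(\alpha) \le \trdeg K_0 \le n$. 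Taking the maximum over all $K$ and all $\alpha \in \mathcal{F}_E(K)$ then yields $\ed(\mathcal{F}_E) \le n$ by Definition \ref{ED-F}.

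There is essentially no obstacle here; the argument is a one-line application of the definition. The only subtlety worth flagging is to be explicit that the map $\mathcal{F}_E(K_0)\to \mathcal{F}_E(K)$ is the one induced by the inclusion of fields and by the compatibility of the equivalence relation $\sim$ with field extensions (if $a\sim a'$ over $K_0$, the same scalars $\lambda_1,\dots,\lambda_m \in K_0^* \subset K^*$ witness $a \sim a'$ over $K$), so the orbit class is well defined at each level and preserved by the transition map. The real work — the matching lower bound $\ed(\mathcal{F}_E) \ge n - \#\{\text{ones among } d_i\}$ — is of course where the valuation-theoretic machinery of Theorem \ref{mthm1} will have to be brought to bear, but that lies beyond this lemma.
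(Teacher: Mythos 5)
Your argument is correct and follows the same route as the paper's proof: take $K_0 = k(a_1,\dots,a_n)$, note $\trdeg K_0 \le n$, observe that the orbit descends, and take the maximum. The extra remark about compatibility of $\sim$ with the inclusion $K_0 \hookrightarrow K$ is a reasonable clarification but does not change the substance.
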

\begin{proof}
Let $[a]=[(a_1,...,a_n)]\in \mathcal{F}_E(K)$ be any equivalence class (orbit). Then $a\in k(a_1,...,a_n)^n\subset X(K)$ and so
$\ed([a])\le \trdeg k(a_1,...,a_n)\le n$. It follows that $\ed(\mathcal{F}_E)\le n$.
\end{proof}
So we can restrict our attention to $k(a_1,...,a_n)$ and, searching for the minimal transcendence degree for an arbitrary equivalence class, replace the $a_i$ by variables $x_i$ which motivates the use of the generic point $[x]$. Define
\[
\begin{array}{c}
f_1=x_1t_1^{e_{11}}t_2^{e_{12}}\cdots t_m^{e_{1m}}=x_1\chi_1(t)\\
f_2=x_2t_1^{e_{21}}t_2^{e_{22}}\cdots t_m^{e_{2m}}=x_2\chi_2(t)\\
\vdots \\
f_n=x_nt_1^{e_{n1}}t_2^{e_{n2}}\cdots t_m^{e_{nm}}=x_n\chi_n(t)\\
\end{array}
\]
So that
\begin{equation}\label{edgp}
\ed[x]=\min \{\trdeg k(f_1,...,f_n)\mid t=(t_1,...,t_m)\in T(k(x_1,...,x_n))\}.
\end{equation}
That suggests using Theorem \ref{mthm} to find the essential dimension of the generic point.
To construct a valuation on the function field of $X=k^n$ we may choose as our set of divisors  
$H_1=\{x_1=0\},H_2=\{x_1=x_2=0\},...,H_n=\{x_1=x_2=...=x_n=0\}$ as in example \ref{mex}.
We have 
\[
\upsilon(f_i)=(0,..,1_i,..,0)+ e_{i1}\upsilon(t_1)+...+e_{im}\upsilon(t_m)\;\quad i=1,...,n
\]
Setting
\[
U=\left(\begin{array}{ccccc}u_{11}&&\cdots&&u_{1n}\\
\vdots&&&&\vdots\\
u_{m1}&&\cdots&&u_{mn}\end{array}\right):=
\left(\begin{array}{c}\upsilon(t_1)\\ \vdots\\ \upsilon(t_m)\end{array}\right)\in M_{m,n}(\zz)
\]
we get
\begin{equation}
\left(\begin{array}{c}\upsilon(f_1)\\ \vdots \\\upsilon(f_n)\end{array}\right)=
I+EU
\end{equation}
\begin{lem}\label{edl}
$\ed[x]\ge\rank(I+EU)\ge n-\rank\;E$.
\end{lem}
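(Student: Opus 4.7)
The plan is to combine the valuation bound of Theorem~\ref{mthm1} with the elementary rank inequality of Lemma~\ref{rk2}. Given any representative $t=(t_1,\dots,t_m)\in T(k(x_1,\dots,x_n))$ of the torus action, I would take the rank-$n$ valuation $\upsilon$ on $k(x_1,\dots,x_n)^*$ from Example~\ref{mex}, defined by the chain $\{x_1=0\}\supset\{x_1=x_2=0\}\supset\cdots$ with uniformizers $\pi_i=x_i$. Since $\upsilon(x_i)$ is the $i$-th standard basis vector of $\zz^n$ and the $e_{ij}$ are integers, the display immediately preceding the lemma shows that the valuation matrix of the tuple $(f_1,\dots,f_n)$ is exactly $I+EU$, where $U$ has rows $\upsilon(t_j)$.

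Now apply Theorem~\ref{mthm1} (equivalently Corollary~\ref{mcor}) to $f_1,\dots,f_n$ to obtain
\[
\trdeg k(f_1,\dots,f_n) \;\ge\; \rank(I+EU).
\]
Minimizing over $t$ and using formula \eqref{edgp} yields the first inequality $\ed[x]\ge \rank(I+EU)$ at the minimizing choice. For the second inequality, I would invoke Lemma~\ref{rk2}(ii) with $A=I_n$ and $B=EU$: since $\rank I_n = n \ge \rank(EU)$,
\[
\rank(I+EU) \;\ge\; n-\rank(EU) \;\ge\; n-\rank E,
\]
where the last step uses the general fact $\rank(EU)\le \rank E$. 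Chaining the inequalities delivers the lemma.

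There is no real obstacle here. The first inequality is a direct plug-in of the main valuation-theoretic result developed earlier, and the second is a two-line matrix computation. The only conceptual point worth emphasizing is that the bound $n-\rank E$ is uniform in $U$, so it survives the minimization over representatives $t$ of the orbit $[x]$ and therefore genuinely furnishes a lower bound on $\ed[x]$; in particular, one does not need to know anything specific about the minimizer $t$.
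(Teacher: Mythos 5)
Your proof is correct and follows the same route as the paper: the first inequality comes from applying Theorem~\ref{mthm1} to the valuation matrix $I+EU$ via the formula~\eqref{edgp}, and the second from Lemma~\ref{rk2}(ii) together with $\rank(EU)\le\rank E$. Your extra remark that the bound $n-\rank E$ is uniform in $U$, and hence survives the minimization over $t$, correctly pins down the intended reading of the chained inequality.
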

\begin{proof}
Clearly if $\rank E=r$ then $\rank EU\le r$ and the second inequality follows from Lemma \ref{rk2}. The first inequality is immediate from (\ref{edgp}) and Theorem \ref{mthm1}.
\end{proof}

The rank of $E$ alone is not all that determines $\ed[x]$. To see this, look for example at $E=(2,2,...,2)$ and  
\[
\begin{array}{c}f_1=x_1t^2\\ \vdots\\f_n=x_nt^2
\end{array}
\]
Then the valuation matrix $\upsilon((f_1,...,f_n)^T)\mod 2=I$ and so there is no way to choose a $t$ such that $\trdeg k(f_1,...,f_n)=n-1$. In fact it will turn out that for $m=1$ we need $\gcd\{e_{11},...,e_{n1}\}=1$ to get $\ed[x]=n-1$.

\subsection{Essential Dimension of $\mathcal{F}_E$ and the generic point}
Now look at the matrix $E\in\zz^{n\times m}$. By elementary row- and column operations (over $\zz$) it can be transformed into Smith normal form 
\[
\left(\begin{array}{ccc}d_1&&\\
&\ddots&\\
&&d_r\\
&&\end{array}\right)\in M_{n,m}(\zz)\quad (r=\rank E)
\]
such that the \textit{elementary divisors} $d_i$ divide each other, $d_1\mid d_2\mid...\mid d_r$.\\

\begin{thm}\label{edgen}
Let $E$ be the matrix of exponents in $f_1,...,f_n$ and $d_1,...,d_r$ be the elementary divisors of its Smith normal form. If $l$ is the number of ones among $\{d_1,...,d_r\}$, then $\ed[x]=n-l$.
\end{thm}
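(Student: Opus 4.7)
The plan is to establish matching lower and upper bounds on $\ed[x]=\min_t\trdeg k(f_1,\ldots,f_n)$, where $t=(t_1,\ldots,t_m)$ ranges over $T(k(x_1,\ldots,x_n))$. Lemma \ref{edl} already gives $\ed[x]\ge\min_U\rank(I+EU)$, the minimum being taken over all integer matrices $U\in M_{m,n}(\zz)$ (every such $U$ is realised by taking each $t_j$ to be a Laurent monomial $x^{U_j}$, so that $\upsilon(t_j)=U_j$). The cruder bound $\rank(I+EU)\ge n-\rank E$ used in Lemma \ref{edl} is however not sharp, since it ignores the sizes of the elementary divisors. The refinement will come from Lemma \ref{rk1} combined with the Smith normal form $E=PDQ$, where $P\in\mathrm{GL}_n(\zz)$, $Q\in\mathrm{GL}_m(\zz)$, and $D$ has diagonal entries $d_1,\ldots,d_r$.

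For the lower bound, if $l=r$ there is nothing to do, since Lemma \ref{edl} already yields $\ed[x]\ge n-r=n-l$. Otherwise $d_{l+1}\ge 2$, and I pick a prime $p$ dividing $d_{l+1}$. Reduction modulo $p$ kills $d_{l+1},\ldots,d_r$ while leaving $d_1,\ldots,d_l$ as units in $\mathbb{F}_p$, so $E\bmod p$ has rank exactly $l$ over $\mathbb{F}_p$. Then for any $U$, $EU\bmod p$ has rank at most $l$; Lemma \ref{rk2} (applied over $\mathbb{F}_p$) gives $\rank((I+EU)\bmod p)\ge n-l$, and Lemma \ref{rk1} lifts this to $\rank(I+EU)\ge n-l$. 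Combined with Lemma \ref{edl} this yields $\ed[x]\ge n-l$.

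For the upper bound I will exhibit one monomial choice of $t$ that attains $\trdeg k(f_1,\ldots,f_n)\le n-l$. Taking $t_j=x^{U_j}$ makes every $f_i$ into a Laurent monomial $x^{r_i}$, where $r_i$ is the $i$-th row of $I+EU$. For such monomial generators, $\trdeg k(f_1,\ldots,f_n)$ equals the $\mathbb{Q}$-rank of the lattice spanned by the $r_i$, which is $\rank(I+EU)$. Substituting $V=QU$ and left-multiplying by $P^{-1}$, this rank equals that of $P^{-1}+DV$, whose $i$-th row is $(P^{-1})_i+d_iV_i$ for $i\le r$ and $(P^{-1})_i$ for $i>r$. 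For each $i\le l$ we have $d_i=1$, so the choice $V_i=-(P^{-1})_i$ zeroes out row $i$ outright; this leaves at most $n-l$ nonzero rows and gives $\rank(I+EU)\le n-l$, as required.

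The main obstacle is the lower bound: the valuation inequality $\rank(I+EU)\ge n-\rank E$ is blind to elementary divisors and therefore not sharp when some $d_i>1$. The decisive observation is to reduce modulo a prime dividing $d_{l+1}$, collapsing the rank of $E$ to exactly $l$ over $\mathbb{F}_p$, after which Lemmas \ref{rk1} and \ref{rk2} supply the sharp estimate. Once that refinement is in place the upper and lower bounds meet and $\ed[x]=n-l$ follows.
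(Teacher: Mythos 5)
Your proof is correct and follows essentially the same route as the paper: lower bound by reducing the valuation matrix $I+EU$ modulo a divisor of $d_{l+1}$ and applying Lemmas \ref{rk1} and \ref{rk2}, upper bound by exhibiting a monomial choice of $t$ (equivalently, of $U$) that kills $l$ rows of the valuation matrix using the Smith normal form. Your version is slightly cleaner in two small respects — you reduce modulo a prime $p\mid d_{l+1}$ so that ranks are computed over the field $\mathbb{F}_p$, and you work directly with the factorization $E=PDQ$ rather than first transforming $f_i$'s and $x_i$'s to put $E$ in Smith form — but these are presentational variants, not a different argument.
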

\begin{proof}
First we note that we can replace $f_i$ by $f_i^{-1}$ or by $f_if_j^\lambda$ ($i\ne j, \lambda\in\zz$) or we can switch $f_i$ and $f_j$ without changing the transcendence degree of $k(f_1,...,f_n)$. These changes translate into row operations on $\upsilon((f_1,...,f_n)^T)$: Multiply row $i$ by $-1$, add $\lambda\cdot$ row $j$ to row $i$, switch row $i$ and row $j$.\\
Row operations are achieved by multiplication from the left by a matrix $P\in \mbox{GL}_n(\zz)$.
So the valuation matrix becomes
\[
\left(\begin{array}{c}\upsilon(f_1)\\ \vdots \\\upsilon(f_n)\end{array}\right)=
PI+PEU
\]
We can also do similar changes in the $x_i$ which affects the valuation matrix by multiplication of a $\zz-$invertible matrix $R$ from the right:
\[
\left(\begin{array}{c}\upsilon(f_1)\\ \vdots \\\upsilon(f_n)\end{array}\right)=
PIR+PEUR
\]
Replacing the matrix of variables $U$ by $Q^{-1}UR$ we get
\[
\left(\begin{array}{c}\upsilon(f_1)\\ \vdots \\\upsilon(f_n)\end{array}\right)=
PIR+PEQU
\]
or in other words, we can assume we have the system
\[
\left(\begin{array}{c}\upsilon(f_1)\\ \vdots \\\upsilon(f_n)\end{array}\right)=
I+EU
\]
where $E$ is in Smith normal form with elementary divisors $d_1,...,d_r$ (take $R=P^{-1}\in \mbox{GL}_n(\zz)$. Thus
\begin{equation}\label{rac}
f_i=
\left\{\begin{array}{ll}
x_it_i^{d_i}&i\le r\\x_i&i>r
\end{array}\right.
\end{equation}
Note: Row operations must be done simultaneously on both summands $I, EU$, whereas column operations can be done independently.\\
Now let $l$ be the number of ones among $d_1,...,d_r$, which means $d_1=...=d_l=1, d_{l+1}\ne1$. We can choose
$t_i=x_i^{-1}$ and so $f_i=1$ for $i=1,...,l$. Thus $\ed[x]\le n-l$.\\
On the other hand, $d_{l+1}\equiv...\equiv d_r\equiv0 \mod d_{l+1}$ and $E \mod d_{l+1}$ is of rank $l$. Therefore, $\rank[(I+EU)\mod d_{l+1}] \ge n-l$ and also $\rank(I+EU)\ge n-l$ (Lemma \ref{rk1}\&\ref{rk2}).
Finally, by Lemma \ref{edl} $\ed[x]\ge \rank \upsilon((f_1,...,f_n)^T)\ge n-l$.
\end{proof}
Our next step is to show that the essential dimension of any $K-$point can not exceed the essential dimension of the generic point and hence $\ed(\mathcal{F}_E)=\ed[x]$. For this we use the specific structure of the given action. In general, for an arbitrary action of an algebraic group, it is not known if the essential dimension of the generic point is equal to the essential dimension of the functor of orbits (\cite[Remark 14.3]{BR3}). First we need a short lemma.
\begin{lem}\label{emat}
Let $\tilde{E}\in M_{\tilde{n},m}(\zz)$ and $E=\left(\begin{array}{c}\tilde{E}\\\hline \ast\end{array}\right)\in M_{n,m}(\zz)$ an extension of $\tilde{E}$. Let $[\tilde{x}]$ and $[x]$ be the generic points of $\mathcal{F}_{\tilde{E}}$ and $\mathcal{F}_E$ respectively. Then $\ed[x]\ge\ed[\tilde{x}]$.
\end{lem}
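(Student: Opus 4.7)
The plan is to realise $[x]$ concretely over a minimal field, truncate to the first $\tilde n$ coordinates to obtain a realisation of $[\tilde x]$ that a priori lives in the too-large field $k(x_1,\dots,x_n)$, and then descend to $k(x_1,\dots,x_{\tilde n})$ by a generic specialisation of the redundant variables. Unpacking Definition \ref{ED-O} for $\ed[x]$, I would fix $y=(y_1,\dots,y_n)\in k(x_1,\dots,x_n)^n$ and $t=(t_1,\dots,t_m)\in k(x_1,\dots,x_n)^{*m}$ with $x_i = y_i\chi_i(t)$ for all $i$, such that $K_0 := k(y_1,\dots,y_n)$ has transcendence degree $\ed[x]$. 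Because the first $\tilde n$ rows of $E$ are exactly the rows of $\tilde E$, the characters $\chi_1,\dots,\chi_{\tilde n}$ are the same for both actions, and so the first $\tilde n$ of the defining equations already present $\tilde y := (y_1,\dots,y_{\tilde n}) \in K_0^{\tilde n}$ as an $\tilde E$-representative of the orbit of $\tilde x$, but only after base change to $\mathcal{F}_{\tilde E}(k(x_1,\dots,x_n))$.

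For the descent step I would specialise the variables $x_{\tilde n + 1},\dots,x_n$. Writing each $y_i$ and $t_j$ in lowest terms and letting $D\in k[x_1,\dots,x_n]\setminus\{0\}$ be the product of the denominators of the $y_i$ and $t_j$ together with the numerators of the $t_j$, the infiniteness of $k$ lets me pick $\xi_{\tilde n+1},\dots,\xi_n\in k$ with $D(x_1,\dots,x_{\tilde n},\xi_{\tilde n+1},\dots,\xi_n)\ne 0$ in $k[x_1,\dots,x_{\tilde n}]$. The substitution $x_i\mapsto x_i$ for $i\le \tilde n$, $x_i\mapsto \xi_i$ for $i>\tilde n$, then extends to a ring homomorphism $\phi\colon k[x_1,\dots,x_n]_D \to k(x_1,\dots,x_{\tilde n})$ on which each $y_i$, $t_j$ and $t_j^{-1}$ is defined and which fixes $k(x_1,\dots,x_{\tilde n})$.

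Applying $\phi$ to the first $\tilde n$ defining equations yields $x_i = \phi(y_i)\chi_i(\phi(t))$ in $k(x_1,\dots,x_{\tilde n})$, with $\phi(t) \in T(k(x_1,\dots,x_{\tilde n}))$ because each $\phi(t_j)\ne 0$. Hence $[\phi(\tilde y)]$ realises $[\tilde x]$ over the subfield $K_0' := k(\phi(y_1),\dots,\phi(y_n))\subseteq k(x_1,\dots,x_{\tilde n})$. Any polynomial relation $P(y_1,\dots,y_n)=0$ holds inside the localisation $k[x_1,\dots,x_n]_D$ and so is preserved by the $k$-algebra map $\phi$, giving $P(\phi(y_1),\dots,\phi(y_n))=0$; therefore $\trdeg K_0' \le \trdeg K_0 = \ed[x]$, and $\ed[\tilde x] \le \ed[x]$. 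The only technical point is securing the specialisation $\phi$, which is a routine genericity argument resting on $D\ne 0$ and the infiniteness of $k$; once $\phi$ is in hand the rest is straight substitution.
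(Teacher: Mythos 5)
Your argument is correct, and it takes a genuinely different route from the paper. The paper's proof is a short numeric argument: it applies Theorem~\ref{edgen} to write $\ed[x]=n-l$ and $\ed[\tilde x]=\tilde n-\tilde l$, where $l,\tilde l$ count the ones among the elementary divisors of $E$ and $\tilde E$; it picks a prime $p$ with $\tilde l=\rank(\tilde E\bmod p)$ and uses subadditivity of rank across the row decomposition of $E$ to get $l\le\rank(E\bmod p)\le(n-\tilde n)+\tilde l$, from which the inequality is immediate. You instead argue straight from Definition~\ref{ED-O}: take a minimal realization $x_i=y_i\chi_i(t)$ of $[x]$ over a field $K_0$ with $\trdeg K_0=\ed[x]$, truncate to the first $\tilde n$ coordinates, and descend from $k(x_1,\dots,x_n)$ to $k(x_1,\dots,x_{\tilde n})$ by specializing $x_{\tilde n+1},\dots,x_n$ to generic constants so that all the rational functions in play stay defined and the $t_j$ stay nonzero; since a $k$-algebra homomorphism cannot increase transcendence degree, the specialized field realizing $[\tilde x]$ has transcendence degree at most $\ed[x]$. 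Both approaches are valid: the paper's is shorter given that Theorem~\ref{edgen} is already in hand and exposes the mod-$p$ mechanism cleanly, whereas yours is more structural and self-contained and would transfer to other coordinate-truncation situations where no closed formula for the essential dimension is available, at the cost of the routine but necessary bookkeeping around the specialization map. One small point: since only the first $\tilde n$ coordinates of $\phi(y)$ enter the realization of $[\tilde x]$, you may as well take $K_0'=k(\phi(y_1),\dots,\phi(y_{\tilde n}))$; as written your $K_0'$ is slightly larger than needed, but this does not affect the bound $\trdeg K_0'\le\ed[x]$.
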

\begin{proof}
Let $l,\tilde{l}$ be the number of ones among the elementary divisors of $E$ and $\tilde{E}$ respectively, so $\tilde{l}=\rank (\tilde{E}\mod p)$ for a suitable $p>1$. But then $l\le\rank (E \mod p)\le n-\tilde{n}+\tilde{l}$ and
\[
\ed[x]=n-l\ge n-(n-\tilde{n}+\tilde{l})=\ed[\tilde{x}]
\]
\end{proof}
\begin{thm}\label{edF}
Let $E$ be a matrix with elementary divisors $d_1,...,d_r$ and $l$ the number of ones among $\{d_1,...,d_r\}$. Let $[x]$ be the generic point of $\mathcal{F}_E$. Then $\ed(\mathcal{F}_E)=\ed[x]=n-l$.
\end{thm}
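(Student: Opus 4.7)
The bound $\ed(\mathcal{F}_E) \geq \ed[x]$ is immediate from the definition of essential dimension of a functor, and Theorem \ref{edgen} already gives $\ed[x] = n - l$. What remains is to show that every orbit $[a] \in \mathcal{F}_E(K)$ satisfies $\ed([a]) \leq n - l$. My plan is to split the argument according to whether $a$ has any zero coordinates, handling the generic case by a direct Smith-normal-form reduction and the degenerate case by passing to a subtorus action via Lemma \ref{emat}.

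When all $a_i \neq 0$, I would diagonalize the action by a multiplicative change of variables. Write $PEQ = S$ with $P \in \mbox{GL}_n(\zz)$, $Q \in \mbox{GL}_m(\zz)$, and $S$ the Smith normal form of $E$ with diagonal entries $d_1, \ldots, d_r$. Setting $b_i := \prod_j a_j^{P_{ij}} \in K^*$ defines a field-preserving bijection of $(K^*)^n$ onto itself (since $P$ is invertible over $\zz$, so $k(a) = k(b)$), and the substitution $\mu_p := \prod_k \lambda_k^{(Q^{-1})_{pk}}$ is a corresponding bijection on torus parameters. A direct computation then transforms the orbit relation $a'' = \lambda \cdot a$ into the Smith normal form action $b''_i = b_i \prod_p \mu_p^{S_{ip}}$. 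Choosing $\mu_i = b_i^{-1}$ for $i \leq l$ (possible because $d_i = 1$) and $\mu_i = 1$ otherwise yields a representative $b'' = (1, \ldots, 1, b_{l+1}, \ldots, b_n)$ whose entries lie in the subfield $K_0 = k(b_{l+1}, \ldots, b_n) \subset K$ of transcendence degree at most $n - l$. Since $b''$ has no zero entries, the back-transformation $a''_i := \prod_j (b''_j)^{(P^{-1})_{ij}}$ is defined and produces a representative of $[a]$ in $K_0^n$, giving $\ed([a]) \leq n - l$.

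When some $a_i = 0$, let $\sigma = \{i : a_i \neq 0\}$, set $\tilde{n} = |\sigma|$, and let $\tilde{E}$ be the $\tilde{n} \times m$ submatrix of $E$ with rows indexed by $\sigma$. Since the torus action preserves the zero pattern, the orbit $[a]$ is completely determined by the orbit $[\tilde{a}]$ of the nonzero part $\tilde{a} \in (K^*)^{\tilde{n}}$ under $\mathcal{F}_{\tilde{E}}$; any representative of $[\tilde{a}]$ over a subfield $K_0$ extends by zeros to a representative of $[a]$ over $K_0$, so $\ed([a]) \leq \ed([\tilde{a}])$. The previous case applied to $\tilde{a}$, combined with Lemma \ref{emat} and Theorem \ref{edgen}, then yields $\ed([\tilde{a}]) \leq \tilde{n} - \tilde{l} \leq n - l$, where $\tilde{l}$ counts the ones among the elementary divisors of $\tilde{E}$.

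The main obstacle is exactly the multiplicative change of variables $a \mapsto a^P$: it is a field-preserving bijection on $(K^*)^n$, but becomes undefined as soon as any coordinate vanishes, which is what forces the separate treatment of orbits meeting the coordinate hyperplanes. More conceptually, this is where the specific torus structure enters the argument: the reduction exploits both the $\zz$-module structure of $(K^*)^n$ and its compatibility with the Smith normal form of $E$, neither of which is available for a general algebraic group. This is consistent with the remark, cited before the theorem, that the equality between the essential dimension of a functor and of its generic point is not known for arbitrary group actions.
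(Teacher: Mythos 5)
Your proposal is correct and follows essentially the same strategy as the paper: reduce to Smith normal form, absorb the unit elementary divisors by the choice $\mu_i = b_i^{-1}$, and treat the degenerate orbits with zero coordinates by restricting to the nonzero part $\tilde{a}$ and invoking Lemma \ref{emat}. The only difference is one of exposition: where the paper briefly says ``we can assume that our system is as in (\ref{rac})'' and takes $t_i = a_i^{-1}$, you make explicit the multiplicative change of variables $b_i = \prod_j a_j^{P_{ij}}$ and $\mu_p = \prod_k \lambda_k^{(Q^{-1})_{pk}}$, carefully verifying that it preserves the generated subfield and noting that it requires all $a_i \neq 0$ --- which is precisely why both you and the paper split into cases.
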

\begin{proof}
We only need to proof the first equality. Clearly $\ed(\mathcal{F}_E)\ge\ed[x]$ by definition of $\ed(\mathcal{F}_E)$.
Let $[a]$ be an equivalence class of any $K-$point $a\in X(K)=K^n$. We would like to have $\ed[a]\le\ed[x]$. If $a_i\ne 0$ for all $i=1,...,n$ we can assume that our system is as in (\ref{rac}) and simply take $t_i=a_i^{-1}, i=1...,l$ to get $\ed[a]\le\trdeg k(a_{l+1},...,a_n)\le\ed[x]$. However, if one of the $a_i$ is zero, this is not working. Without loss of generality assume that $a_1=...=a_{\tilde{n}}\ne 0$ for some $\tilde{n}<n$ and $a_i=0, i>\tilde{n}$. Let $\tilde{E}$ be the first $\tilde{n}$ rows of $E$ and $\tilde{a}=(a_1,...,a_{\tilde{n}})$. Then by the previous argument and Lemma \ref{emat},
\[
\ed[a]=\ed[\tilde{a}]\le\ed[\tilde{x}]\le \ed[x]
\]
as desired.
\end{proof}
For the case with only one function $t$ ($m=1$), we have
\begin{cor}
Let $E=(e_1,...,e_n)$ be a tuple of exponents. Then 
\[
\ed(\mathcal{F}_E)=\left\{\begin{array}{ll}n-1& \gcd\{e_1,...,e_n\}=1\\
n&else
\end{array}\right.
\]
\end{cor}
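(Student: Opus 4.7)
The plan is simply to invoke Theorem \ref{edF}, which reduces the corollary to a computation of the Smith normal form of the $n \times 1$ matrix $E = (e_1,\ldots,e_n)^T$ and a count of how many of its elementary divisors equal $1$.

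First I would dispose of the degenerate case. If $e_1 = \cdots = e_n = 0$, then $E$ is the zero matrix, so $r = \rank E = 0$ and there are no elementary divisors at all. In particular $l = 0$, so Theorem \ref{edF} gives $\ed(\mathcal{F}_E) = n - 0 = n$; this lands in the ``else'' branch of the corollary, which is correct since $\gcd\{0,\ldots,0\}$ is not $1$.

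Next, suppose not all $e_i$ vanish. Then $E$ has rank $r = 1$, and since elementary row and column operations over $\zz$ on a single column produce the gcd of its entries, the Smith normal form of $E$ is the column vector with single nonzero entry $d_1 = \gcd\{e_1,\ldots,e_n\}$. Thus there is exactly one elementary divisor, namely $d_1$. If $\gcd\{e_1,\ldots,e_n\} = 1$, then $l = 1$ and Theorem \ref{edF} yields $\ed(\mathcal{F}_E) = n - 1$. If $\gcd\{e_1,\ldots,e_n\} > 1$, then $d_1 \neq 1$, so $l = 0$ and $\ed(\mathcal{F}_E) = n$.

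There is no real obstacle here: the work has already been done in Theorem \ref{edF}. The only thing to watch is the boundary case of the zero matrix, where one must be careful about what ``the elementary divisors'' means (the empty tuple rather than a tuple of zeros), so that $l$ is correctly read off as $0$.
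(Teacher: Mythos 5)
Your proof is correct and is the natural specialization of Theorem \ref{edF} to the $m=1$ case; the paper itself states this corollary without proof precisely because the Smith normal form of an $n \times 1$ column is just its gcd, which is the observation you make. Your extra care about the zero column and the empty tuple of elementary divisors is a nice touch and handles a boundary case the paper glosses over.
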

For the elliptic curves in the introductory example \ref{elcex} we get
\begin{cor}\label{elc}
$\ed(\mathcal{F}_{El})=2$.
\end{cor}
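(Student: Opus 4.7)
The plan is to recognize $\mathcal{F}_{El}$ as an instance of the torus-orbit functor $\mathcal{F}_E$ already analyzed. Going back to the description in \ref{elcex}, an elliptic curve over $K$ is encoded by a pair $(a,b)\in K^2$, with $(a,b)\sim(a',b')$ iff there exists $\lambda\in K^*$ with $a'=a\lambda^4$ and $b'=b\lambda^6$. This is exactly the equivalence relation defining $\mathcal{F}_E$ in the case
\[
n=2,\qquad m=1,\qquad E=\begin{pmatrix}4\\ 6\end{pmatrix}\in M_{2,1}(\zz).
\]
So $\mathcal{F}_{El}=\mathcal{F}_E$ for this matrix $E$, and therefore $\ed(\mathcal{F}_{El})=\ed(\mathcal{F}_E)$.

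Next I would invoke the immediately preceding corollary, which in the $m=1$ case tells us that $\ed(\mathcal{F}_E)=n-1$ or $n$ according as $\gcd\{e_1,\dots,e_n\}=1$ or not. For $E=(4,6)^{T}$ we have $\gcd(4,6)=2\neq 1$, so the corollary yields
\[
\ed(\mathcal{F}_{El})=\ed(\mathcal{F}_E)=n=2.
\]

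Equivalently, one can go directly through Theorem \ref{edF}: the Smith normal form of $(4,6)^{T}$ has a single elementary divisor $d_1=\gcd(4,6)=2$, so the number $l$ of $d_i$ equal to $1$ is $0$, and $\ed(\mathcal{F}_{El})=n-l=2-0=2$.

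There is no real obstacle here; the only step that requires a small check is the identification $\mathcal{F}_{El}=\mathcal{F}_E$, i.e.\ verifying that the equivalence on Weierstrass coefficients coming from isomorphism of elliptic curves coincides with the torus action with exponent vector $(4,6)$. Once this is noted, the result is an immediate specialization of Theorem \ref{edF} (or its $m=1$ corollary).
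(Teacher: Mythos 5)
Your argument matches the paper's: identify $\mathcal{F}_{El}$ with $\mathcal{F}_E$ for $E=(4,6)^T$ and apply the $m=1$ corollary (equivalently Theorem~\ref{edF}), noting $\gcd(4,6)=2\neq 1$, so $\ed(\mathcal{F}_{El})=2$. This is exactly the route the paper takes.
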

\begin{proof}
We have $E=(4,6)$ with $4,6$ not relatively prime. So $\ed(\mathcal{F}_{El})=2$.
\end{proof}

\subsection{Example: Homogeneous forms in a rigid system}\label{hf}
For a fixed field $K/k$ let $V$ be a $m-$dimensional vector space over $K$, $L_1,...,L_m$ be lines in general
position, (i.e. they span $V$), and $f:V\rightarrow K$ a homogeneous form of degree $d$. 
Consider the set of tuples $\{(V,f,L_1,...,L_m)\}$. Two tuples $(V,f,L_1,...,L_m)$,
$(V^\prime,f^\prime,L_1^\prime,...,L_m^\prime)$ are equivalent $\iff$ there exists an isomorphism
\[
\phi:V\rightarrow V^\prime\quad\mbox{such that}
\]
\[
L_i\mapsto L_i^\prime\quad\mbox{i=1,...,m}
\]
\[
f=f^\prime\phi
\]
Let $\mathcal{F}_{m,d}(K)$ be the set of all such tuples over the field $K$ modulo equivalence.\\
We call $\mathcal{F}_{m,d}(K)$ homogeneous forms in a rigid system.\\
Choose a basis $(v_1,...,v_m)$ with $v_i\in L_i$ then $f$ becomes a homogeneous polynomial of degree $d$ in the
coordinates:
\[
v=\sum_{i=1}^mx_iv_i\in V
\]
\[
f(v)=f(x)=\sum_{d_1+...+d_m=d}a_{d_1,...,d_m}x_1^{d_1}\cdots x_m^{d_m}
\]
The coefficients $a_{d_1,...,d_m}$ determine the form $f$ for a given basis (set of lines) and
we can identify $(V,f,L_1,...,L_m)$ by the tuple of these coefficients. $d_1,...,d_m$ partition $d$ into $m$ pieces and there are $N={{m+d-1}\choose{d}}$ such partitions. So if we choose an order, $a=(a_{d_1,...,d_m})_{d_1+...+d_m=d}\in K^N$.
\[
a\sim a^\prime\iff
\]
\[
\exists\;\phi:V\rightarrow
V^\prime, v_i\mapsto \lambda_iv_i^\prime, \lambda_i\in K^*, i=1,...,m, f=f^\prime\phi\iff
\]
\[
f(x_1,...,x_m)=f^\prime(\lambda_1x_1,...,\lambda_mx_m)
\iff
\]
\[
a_{d_1,...,d_m}=a_{d_1,...,d_m}^\prime\lambda_1^{d_1}\cdots \lambda_m^{d_m}\quad d_1+...+d_m=d
\]
Thus we are in the familiar setting $\mathcal{F}_{m,d}(K)=K^N/\sim$ where the equivalence
relation is expressed in the exponent matrix
\[
E=\left(\begin{array}{cccc}d&&&\\
d-1&1&&\\
&\ddots&&\\
\\
\ddots&&&\\
&&&d
\end{array}\right)\in M_{N,m}(\zz)
\]
Every row of $E$ is a partition of $d$.\\
Now we compute the Smith normal form of $E$. In $E$ the following rows occur:
\[
\left(\begin{array}{ccccc}1&\ast&&\cdots&\ast\\
0&1&\ast&\cdots&\ast\\
&&\ddots&&\\
0&\cdots&0&1&d-1
\end{array}\right)
\]
Therefore, the first $m-1$ elementary divisors of the Smith normal form are $1$. All rows of $E$ sum up to $d$.
After applying row-operations the rows sum up to integer multiples of $d$. Since also $(0,...,0,d)$ occurs, the
row reduced matrix must be
\[
\left(\begin{array}{ccccc}1&\ast&&\cdots&\ast\\
&1&\ast&\cdots&\ast\\
&&\ddots&&\\
&&&1&d-1\\
&&&&d\\
\\
\\
\\
\end{array}\right)
\]
and hence the elementary divisors of the Smith normal form are $1$ ($m-1$ times) and $d$.\\
It follows from Theorem \ref{edF} that
\[
\ed(\mathcal{F}_{m,d})=N-\mbox{\#ones in Smith normal form}
\]
\[=
\left\{\begin{array}{ll}{{m+d-1}\choose d}-m+1&d>1\\0&d=1\end{array}\right.
\]
For example for $d=2$ the essential dimension of quadratic forms (in a rigid system) in dimension $m$ are:
\[
\begin{array}{c|c}m&\ed(\mathcal{F}_{m,2})\\
\hline
1&1\\
2&2\\
3&4\\
4&7\\
5&11\\
6&16\\
:&:
\end{array}
\]
\begin{rem}
For more information on the essential dimension of homogeneous forms (in a non rigid system), we refer to \cite{BR3}.
\end{rem}

\section{Torus Action on Projective Space}\label{ps}
Let $K$ be a field extension of $k$. So far we only looked at the action of the torus $T(K)$ on the affine space $\mathbb{A}^n(K)$. We will now consider the action on projective space $\mathbb{P}^n(K)$. Any action of the torus $T$ on $k^{n+1}$ which is determined by a matrix $E\in M_{n+1,m}(\zz)$ as in \ref{tact} defines an action on $\mathbb{P}^n(K)$. Let
\[
\mathcal{PF}_E(K)=\mathbb{P}^n(K)/\sim
\]
where 
\[
a\sim b\mbox{ if } a=t\cdot b\;\mbox{ for a }t\in T(K)
\]
Here we have the additional requirement that $a\sim
ca$ for any $c\in K^*$, since they are equal in $\mathbb{P}^n(K)$. But then we can just add an extra function $t_{m+1}$ in every coordinate and return to the affine case with the extended matrix
\[
\tilde{E}=\left(\begin{array}{ccc|c}&&&1\\
&E&&\vdots\\
&&&1
\end{array}\right)\in M_{n+1,m+1}(\zz)
\]
So that 
\[
\ed(\mathcal{PF}_E)=\ed(\mathcal{F}_{\tilde{E}})
\]
\begin{prop}\label{F-PF}
$\ed(\mathcal{F}_E)\ge\ed(\mathcal{PF}_E)\ge\ed(\mathcal{F}_E)-1$
\end{prop}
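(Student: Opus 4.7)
The plan is to exploit the identity $\ed(\mathcal{PF}_E) = \ed(\mathcal{F}_{\tilde E})$ recorded just above the statement, where $\tilde E \in M_{n+1,m+1}(\zz)$ is $E$ with a column of $1$'s appended. Writing $l$ (resp.\ $\tilde l$) for the number of $1$'s among the elementary divisors of $E$ (resp.\ $\tilde E$), Theorem \ref{edF} gives
\[
\ed(\mathcal{F}_E) = (n+1) - l, \qquad \ed(\mathcal{PF}_E) = (n+1) - \tilde l,
\]
so the proposition reduces to the purely matrix-theoretic statement $l \le \tilde l \le l+1$. Throughout I will use the standard fact that the number of $1$'s among the elementary divisors of an integer matrix $A$ equals the largest $i$ with $D_i(A)=1$, where $D_i(A)$ is the gcd of all $i \times i$ minors of $A$ (convention $D_0=1$).

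For the inequality $\tilde l \ge l$, I observe that every $i \times i$ minor of $E$ is automatically an $i \times i$ minor of $\tilde E$ (simply choose columns from the first $m$), so $D_i(\tilde E) \mid D_i(E)$. Applied at $i=l$, this gives $D_l(\tilde E)=1$, whence $\tilde l \ge l$. This half corresponds to the inclusion $\ed(\mathcal{F}_E) \ge \ed(\mathcal{PF}_E)$.

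For the reverse inequality $\tilde l \le l+1$, I exploit that the adjoined column consists entirely of $1$'s. If $l = \rank E$ the bound is automatic, since $\tilde l \le \rank \tilde E \le \rank E + 1 = l+1$. Otherwise set $d := d_{l+1}(E) > 1$, and I will show $d \mid D_{l+2}(\tilde E)$, which forces $D_{l+2}(\tilde E) \ne 1$ and hence $\tilde l \le l+1$. Every $(l+2) \times (l+2)$ minor of $\tilde E$ falls in one of two classes: minors whose columns all lie among the first $m$ are $(l+2) \times (l+2)$ minors of $E$, and are therefore divisible by $D_{l+2}(E)$, which either vanishes (if $l+2 > \rank E$) or equals $d_{l+1} d_{l+2}$ and is divisible by $d$; minors involving the last column, by Laplace expansion along that column, become an alternating $\zz$-combination of $(l+1) \times (l+1)$ minors of $E$, each divisible by $D_{l+1}(E) = d_1 \cdots d_{l+1} = d$.

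The main obstacle is essentially the Smith normal form bookkeeping, in particular handling the edge cases $l = \rank E$ and $l+2 > \rank E$; both are dispatched cleanly by the rank inequality $\rank \tilde E \le \rank E + 1$ and by the observation that $D_{l+2}(E) = 0$ once $l+2$ exceeds the rank.
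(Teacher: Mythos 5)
Your proof is correct and takes essentially the same route as the paper's: both reduce, via the identity $\ed(\mathcal{PF}_E)=\ed(\mathcal{F}_{\tilde E})$ and Theorem \ref{edF}, to the purely arithmetic inequality $l\le\tilde l\le l+1$ on the number of unit elementary divisors of $E$ versus $\tilde E$, and both derive that inequality from Smith-normal-form bookkeeping. Your determinantal-divisor and Laplace-expansion argument is the spelled-out form of the paper's terser observation that $\tilde l\le\rank(\tilde E\bmod d_{l+1})\le\rank(E\bmod d_{l+1})+1=l+1$, with the added benefit that it also supplies an explicit proof of $l\le\tilde l$, which the paper dismisses as obvious.
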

\begin{proof}
Let $l$ and $\tilde{l}$ be the number of ones of the elementary divisors of $E$ and $\tilde{E}$ respectively, so that $\ed(\mathcal{F}_E)=n+1-l$ and $\ed(\mathcal{PF}_E)=n+1-\tilde{l}$. Obviously $l\le\tilde{l}$ and the first inequality follows.
We have $l=\rank(E \mod d_{l+1})$ where $d_{l+1}$ is the $(l+1)$th elementary divisor. Then $\tilde{l}\le\rank(\tilde{E} \mod d_{l+1})\le l+1$ and the second inequality follows.
\end{proof}
To determine whether $\ed(\mathcal{PF}_E)$ is less than $\ed(\mathcal{F}_E)$ or not, one simply has to find the Smith normal form of $\tilde{E}$. There is no better formula than this. To illustrate it consider the two matrices
\[
\begin{array}{lcr}
E_1=\left(\begin{array}{c}2\\3\end{array}\right)&&E_2=\left(\begin{array}{c}-2\\3\end{array}\right)
\end{array}
\]
While in both cases $\ed(\mathcal{F}_{E_1})=\ed(\mathcal{F}_{E_2})=1$, the extended matrices are transformed to
\[
\begin{array}{lcr}
\tilde{E}_1\rightsquigarrow\left(\begin{array}{cc}1&0\\0&1\end{array}\right)&&\tilde{E}_2\rightsquigarrow\left(\begin{array}{cc}1&0\\0&5\end{array}\right)
\end{array}
\]
and so 
\[
\begin{array}{lcr}
\ed(\mathcal{PF}_{E_1})=0&&\ed(\mathcal{PF}_{E_1})=1
\end{array}
\]

\subsection{Example: Hypersurfaces in Projective Space}
This example is the analogue of the homogeneous forms on projective space.
Consider the functor
\[
\mathcal{PF}_{m,d}(K)=\{(V,H,L_1,...,L_m)\}
\]
where $V$ is a $m$ dimensional $K-$ vector space, $L_1,...,L_m$ lines in general position and $H$ is a hypersurface of degree $d$ in the
projective space $\pp(V)$. $H$ is defined through a homogeneous form $f$ of degree $d$ with $f\sim
cf$ for any $c\in K^*$. Identifying $\mathcal{PF}_{m,d}(K)$ with
${m+d-1}\choose{d}$-tuples $a=(a_{d_1,...,d_m})_{d_1+...+d_m=d}$ we get
\[
a\sim a^\prime\iff
\]
\[
\exists\;\lambda_1,...,\lambda_{m+1}\in K^*,\;f(x_1,...,x_m)=\lambda_{m+1}f^\prime(\lambda_1x_1,...,\lambda_mx_m)
\iff
\]
\[
a_{d_1,...,d_m}=a_{d_1,...,d_m}^\prime\lambda_1^{d_1}\cdots \lambda_m^{d_m}\lambda_{m+1}\quad d_1+...+d_m=d
\]
The exponent matrix is then
\[
E=\left(\begin{array}{ccccc}d&&&&1\\
d-1&1&&&1\\
&\ddots&&&\vdots\\
\\
\ddots&&&&\\
&&&d&1
\end{array}\right)\in M_{N,m+1}(\zz)\quad N={{m+d-1}\choose d}
\]
as in the case of homogeneous forms, it reduces to
\[
\left(\begin{array}{cccccc}1&&&&&\\
&1&&&&\\
&&\ddots&&&\\
&&&1&&\\
&&&&d&1\\
&&&&&\ast\\
&&&&&\vdots\\
&&&&&\ast
\end{array}\right)
\mbox{ and then to }
\left(\begin{array}{ccccc}1&&&&\\
&1&&&\\
&&\ddots&&\\
&&&1&\\
&&&&0\\
\\
\\ \\
\end{array}\right)
\]
Hence we get
\[
\ed(\mathcal{PF}_{m,d})={{m+d-1}\choose d}-m
\]

\section{Finite Abelian Groups}\label{FAG}
The valuation theoretic approach and the result of Theorem \ref{mthm} can be used to calculate the essential dimension of other algebraic objects; with its help we show in this section that the essential dimension of finite abelian groups is equal to its rank.

\subsection{Essential Dimension of Algebraic Groups}
Let $K$ be a field extension of $k$, $G(K)$ an algebraic group and the functor $\mathcal{F}_G$ defined by
\[
\mathcal{F}_G(K)=H^1(K,G)
\]
(the Galois-cohomology set, see \cite{BF} for details).
The essential dimension of the group  is defined to be 
\[
\ed(G):=\ed(\mathcal{F}_G)
\]
To apply our results from valuation theory, we recall the original definition of essential dimension of finite groups given in \cite{BR1}.
Let $G$ be a finite group and $X$ a variety over $k$ on which $G$ acts faithfully. 
\begin{defn} A \textit{compression} of $X$ is a faithfull $G-$variety $Y$ and a dominant rational map
\[
f:X\dashrightarrow Y
\]
which is $G-$equivariant.
\end{defn}
The essential dimension of $X$ measures how much $X$ can be compressed:
\begin{defn}
The \textit{essential dimension} of $X$, denoted $\ed(X)$ is defined to be
\[
\ed(X)=\min \dim(Y)
\]
where $Y$ ranges over all compressions of $X$.
\end{defn}
Let $f:X\dashrightarrow Y$ be a compression of $X$ such that $\ed(X)=\dim(Y)$. We can assume that $Y\subset\mathbb{A}^r\;(r\le\dim X)$ is affine and $f=(f_1,...,f_r)$. If we have a valuation $\upsilon:X\rightarrow \zz^n$ we are in the situation of Corollary \ref{mcor}. and get a lower bound on $\ed(X)$:
\begin{prop}\label{pfg}
Let $X$ be a faithfull $G-$variety, $f:X\dashrightarrow Y$ a compression and $\upsilon:k(X)^*\rightarrow \zz^n$ a valuation. Then
\[
\ed(X)\ge \rank \upsilon(f)
\]
\end{prop}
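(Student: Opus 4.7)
The plan is to unpack the definition of $\ed(X)$ and apply Corollary \ref{mcor} directly. As the paragraph preceding the proposition makes clear, one should read the statement with $f : X \dashrightarrow Y$ being a compression that \emph{realizes} the essential dimension, i.e.\ $\dim Y = \ed(X)$; such a compression exists by definition.

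First I would fix such an optimal compression $f : X \dashrightarrow Y$. Since compressions are defined only up to birational equivalence of the target, I may shrink $Y$ to an affine open subset and choose a closed embedding $Y \hookrightarrow \mathbb{A}^r$ (with $r \le \dim X$). Writing $f = (f_1, \ldots, f_r)$ in these coordinates, each $f_i$ lies in $k(X)^*$ because $f$ is dominant, and so the matrix $\upsilon(f) \in M_{r,n}(\zz)$ with rows $\upsilon(f_i)$ is well-defined.

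Then I would simply invoke Corollary \ref{mcor} on the dominant rational map $f$ and the valuation $\upsilon$: it yields $\dim Y \ge \rank \upsilon(f)$. Combined with $\dim Y = \ed(X)$, this is exactly the desired inequality. There is no genuine obstacle; all the real content is packed into Corollary \ref{mcor}, and the present proposition is essentially a reformulation of that corollary in the language of compressions and essential dimension. The only point worth flagging is the interpretive one above --- that the $f$ in the statement should be taken to be an optimal compression, since for a non-optimal compression the bound $\dim Y \ge \rank \upsilon(f)$ would only recover $\dim Y$ (which may exceed $\ed(X)$) as the lower bound on the right-hand side.
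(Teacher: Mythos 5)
Your proof is correct and matches the paper's own argument, which is embedded in the paragraph immediately preceding the proposition: fix a compression $f:X\dashrightarrow Y$ with $\dim Y = \ed(X)$, shrink $Y$ to an affine piece in $\mathbb{A}^r$ so $f=(f_1,\ldots,f_r)$, and apply Corollary \ref{mcor}. Your interpretive remark is also on target --- read literally, the inequality fails for a non-optimal compression (e.g.\ $f=\mathrm{id}$ on a faithful $G$-variety that admits a proper compression gives $\rank\upsilon(f)=\dim X>\ed(X)$), so $f$ must indeed be taken to realize $\ed(X)$ as the paper's set-up indicates.
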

If we consider only linear $G-$varieties (i.e. representations of $G$), then it was proved in \cite[Theorem 3.1]{BR1} that the essential dimension only depends on the group and not on the variety. Thus 
\begin{defn}
The essential dimension of a finite group $G$ is
\[
\ed(G)=\ed(V)
\]
where $V$ is a faithfull linear $G-$variety.
\end{defn}

\subsection{Finite Abelian Groups}
Now we come to the result on finite abelian groups mentioned in the introduction:
\begin{thm}{\rm(Buhler-Reichstein)}\label{fag}
Let $G$ be a finite abelian group of rank $n$. Then $\ed(G)=n$.
\end{thm}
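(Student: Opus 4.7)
My plan is to identify the Galois cohomology functor $\mathcal{F}_G$ with a subfunctor of one of the orbit functors $\mathcal{F}_E$ treated in Section 3, and then invoke Theorem \ref{edF} directly, rather than redoing any valuation computation.

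First I would put $G$ in invariant factor form, $G \cong \zz/d_1 \oplus \cdots \oplus \zz/d_n$ with each $d_i \ge 2$. Since $k$ is algebraically closed of characteristic zero, it contains all roots of unity, so Kummer theory yields a natural identification
\[
\mathcal{F}_G(K) = H^1(K,G) \;\cong\; \bigoplus_{i=1}^n K^*/(K^*)^{d_i}.
\]
Thus a class is represented by some $(a_1,\dots,a_n) \in (K^*)^n$, with the equivalence $(a_i) \sim (a_i b_i^{d_i})$, $b_i \in K^*$. Comparing with Section \ref{tact}, this realizes $\mathcal{F}_G$ as the subfunctor of $\mathcal{F}_E$ obtained by restricting the domain from $K^n$ to $(K^*)^n$, where $E = \operatorname{diag}(d_1,\dots,d_n)$. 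Observe that $E$ is already in Smith normal form, and since each $d_i \ge 2$, none of its elementary divisors equals $1$.

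The upper bound $\ed(\mathcal{F}_G) \le n$ is immediate: any class in $\mathcal{F}_G(K)$ is represented by a tuple $(a_1,\dots,a_n)$, so it descends to $k(a_1,\dots,a_n)$, which has transcendence degree at most $n$. For the lower bound I would look at the generic point $[x] = [(x_1,\dots,x_n)]$ over $K=k(x_1,\dots,x_n)$, which lies in $\mathcal{F}_G(K)$ since each $x_i$ is nonzero. The key observation is that a descent of $[x]$ to an intermediate field $K_0 \subset K$ in the sense of $\mathcal{F}_E$ --- that is, $[x]=[a]$ for some $a \in K_0^n$ --- forces $a_i b_i^{d_i}=x_i$ with $b_i \in K^*$, and hence $a_i \ne 0$, so the descent automatically lies in $\mathcal{F}_G(K_0)$. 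This equates $\ed_{\mathcal{F}_G}[x]$ with $\ed_{\mathcal{F}_E}[x]$, which by Theorem \ref{edF} equals $n - 0 = n$.

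The main obstacle is precisely this compatibility argument in the last step: one must check that descent in the full orbit functor $\mathcal{F}_E$ and descent in the restricted cohomology functor $\mathcal{F}_G$ coincide on the generic point, which is what prevents the passage to smaller $K_0$ from leaving the image of Kummer theory. Everything else is either structural (the Kummer identification and the invariant factor decomposition) or is already contained in Section 3, so once this compatibility is pinned down the entire valuation-theoretic machinery built earlier may be invoked as a black box.
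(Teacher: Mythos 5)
Your proof is correct, and it takes a genuinely different route from the paper's. The paper works from the compression definition of essential dimension for finite groups (recalled in its Section~\ref{FAG} from \cite{BR1}): given a compression $f:X\dashrightarrow Y$ of the faithful $\mathbb{A}^n$-representation, it uses $G$-equivariance of $f$ to constrain the exponents of monomials in each $f_j$ modulo a prime $p\mid d_1$, forcing $\upsilon(f)\equiv I \pmod p$ for the standard coordinate valuation and hence $\rank\upsilon(f)=n$; along the way it must handle denominators by replacing $f_i=r/q$ with $rq^{p-1}/q^p$. You instead stay entirely inside the functorial definition $\ed(G)=\ed(\mathcal{F}_G)$, use Kummer theory to identify $\mathcal{F}_G$ naturally with the restriction of $\mathcal{F}_E$ ($E=\operatorname{diag}(d_1,\dots,d_n)$) to $(K^*)^n$, and invoke Theorem~\ref{edF} as a black box. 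The compatibility step you flag --- that a descent of $[x]$ in $\mathcal{F}_E$ is automatically a descent in $\mathcal{F}_G$ --- is indeed the only thing to check, and your argument for it is sound: if $a\sim_K x$ under the torus action then each $a_i=x_it_i^{-d_i}\ne 0$, so $a\in(K_0^*)^n$ and the two equivalence relations agree there. What the paper's route buys is independence from Theorems~\ref{edgen}/\ref{edF} (its argument is self-contained, and directly illustrates how a $G$-equivariance hypothesis feeds into Corollary~\ref{mcor}/Proposition~\ref{pfg}); what yours buys is economy and the absence of any implicit appeal to the equivalence between the compression definition and the $H^1$-functor definition, since you never leave the latter.
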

\begin{proof}G is finite and of rank $n$ so
\[
G\cong \zz/d_1\times...\times\zz/d_n\quad \mbox{for integers }d_1,...,d_n\ge 2 
\]
\[
\mbox{and }\;d_1\mid\cdots\mid d_n
\]
We will identify the two groups.
$G$ acts faithfully on $X=\mathbb{A}^n$ by 
$$(z_1,...,z_n)\cdot(x_1,...,x_n)=(\zeta_1^{z_1}x_1,...,\zeta_n^{z_n}x_n)$$
where $\zeta_i$ is a $d_i$th root of unity ($\ne1$). In other words, $G$ has a faithful linear representation of  dimension $n$ and so $\ed(G)\le n$.
\\
Suppose now we have a rational dominant $G$-equivariant map $f:X\dashrightarrow Y$ onto a $G$-variety $Y$ (i.e. $Y$ is a compression) such that $\ed(G)=\dim(Y)$. Define a valuation $\upsilon:k(x_1,...,x_n)\rightarrow \zz^n$ as in the standart example 2.4. through subvarieties $H_1=\{x_1=0\}, H_2=\{x_1=0,x_2=0\},
...,H_n=\{x_1=0,...,x_n=0\}.$ It will turn out that the induced valuation on $k(Y)$ must be of rank $n$ which gives the desired lower bound on the essential dimension. Let $p$ be a prime integer that divides $d_1$ (hence divides all $d_i$).\\
If $f_i=r/q=rq^{p-1}/q^{p}$ with polynomials $r,q$, we have
\[
\upsilon(f_i)=\upsilon(rq^{p-1})-p\upsilon(q)= \upsilon(rq^{p-1})\mod p,
\]
so if we are only interested in the valuation mod $p$, we can assume that $f_i$ is a polynomial in $x_1,...,x_n$.
Let $g=(z_1,...,z_n)\in G$. From the $G-$\nolinebreak equivariancy of $f$ we get
\[
f_j(\zeta_1^{z_1}x_1,...,\zeta_n^{z_n}x_n)=f_j(gx)=(gf(x))_j=\zeta_j^{z_j}f_j(x)
\]
and so every term in $f_j$ must be of the form
$x_1^{e_1}\cdots x_j^{e_j}\cdots x_n^{e_n}, \quad e_j\equiv 1 \mod d_j,$ 
$e_i\equiv 0 \mod d_j, i\ne j$ and the same holds$\mod p$.
Thus $\upsilon(f_j)\mod p =(0,..,1,..,0)$ and 
\[
\upsilon(f)\mod p=\left(\begin{array}{c}\upsilon(f_1)\\:\\ \upsilon(f_r)\end{array}\right)\mod p=I.
\]
Lemma \ref{rk1} asserts that $\rank \upsilon(f)=n$ and from Proposition \ref{pfg} it follows that $\ed(G)=\dim(Y)\ge n$.
\end{proof}
\begin{rem}
It is not known what the essential dimension of an arbitrary (non abelian) finite group is. For example the essential dimension of the symmetric group $S_n$, which is of great interest, is only known for $n\le 6$.
\end{rem}

\section{Canonical Dimension}\label{cd}
The purpose of this section is to recall the notion of canonical dimension and show its connection to the essential dimension of the functor of orbits (of the action of an algebraic group). In particular, we will obtain the result of Theorem \ref{edgen}, the essential dimension of the generic orbit  of a torus action, using the theory of canonical dimension. Canonical dimension was introduced by G. Berhuy and Z. Reichstein in \cite{BR3}. We refer also to \cite{Re1} for more information on the essential dimension of algebraic groups.

Let $G$ be an algebraic group and $X$ an irreducible variety on which $G$ acts.
\begin{defn}
A  variety  with function field $k(X)^G$ is called a \textit{rational quotient} of $X$ and denoted $X/G$. There exists a rational map $\pi:X\dashrightarrow X/G$ that induces the inclusion $k(X/G)=k(X)^G\hookrightarrow k(X)$.
\end{defn}
Note: $X/G$ and $\pi$ are only defined up to birational isomorphism.
\begin{defn}
A \textit{canonical form map} $F:X\dashrightarrow X$ is a rational map such that $F(x)=g(x)x \;\forall x\in X$ where $g:X\dashrightarrow G$ is some rational map.
\end{defn}
The canonical dimension is defined as
\begin{defn}
The \textit{canonical dimension} of a $G-$variety $X$ is 
\[
\cd(X,G)=\min\{\dim F(X)\}-\dim(X/G)
\]
where $F$ ranges over all canonical form maps.
\end{defn}
Now we outline some results that connect canonical dimension, essential dimension of orbits of a torus action and essential dimension of finite Abelian groups.\\
As in section \ref{ota}, let $T(K)=(K^*)^m$ be a torus acting on $X(K)=K^n$ through characters and $E$ be the matrix that determines the action. Once again, we are interested in the essential dimension of the generic point $[x]$.
We can assume that the matrix of exponents is in Smith normal form, so that the characters are simply
\[
\chi_i(t)=\left\{\begin{array}{ll}t_i^{d_i}&i\le r\\1&i>r\end{array}\right.
\]
(see (\ref{rac})) and without loss of generality, assume that $E\in M_{n,r}$ i.e. $E$ has full rank. 
From \cite[Prop. 14.1]{BR3} we get the following equality
\begin{equation}
\ed[x]=\cd(X,T)+\dim X/T 
\end{equation}
where $x$ is the generic point of $X$.
Let $S\subset T$ be the kernel of the action of $T$ on $X$,
\[
S=\{t\in T\mid\chi(t)=1\}\cong\zz/d_1\times...\times\zz/d_r
\]
The quotient $T/S$ is connected and the induced representation of $T/S$ on $X$ is diagonalizable hence $T/S$ is a torus. The essential dimension of a torus is $0$ (\cite[Ex. 3.9]{Re1}) and so from \cite[Lemma 5.2]{Re1},\cite[1.4.1]{Po} $X$ splits
\begin{equation}
X \mbox{ is birationally isomorphic to }T/S\times X/(T/S)
\end{equation}
$T$ acts trivially on $X/(T/S)$ and applying \cite[Lemma 7.1]{BR3} we have
\begin{equation}
\cd(X,T)=\cd(T/S\times X/(T/S),T)=\cd(T/S,T)
\end{equation}
\begin{defn}
A subgroup $H$ of a group $G$ that acts on a variety $X$ is called a \textit{stabilizer in general position} if for every point $x$ in general position the stabilizer Stab$(x)$ is conjugated to $H$.
\end{defn}
$S$ is the stabilizer of every element $\in(k^*)^n\subset X$ so $S$ is a stabilizer in general position. 
\cite[Prop. 5.7.b]{BR3} and \cite[Prop. 5.3]{Re1} then assert that
\begin{equation}
\cd(T/S,T)=\ed(S)
\end{equation}
$S$ is a finite Abelian group and so the essential dimension is its rank (Theorem \ref{fag}), $\ed(S)=\rank (S)=r-$\#ones among $\{d_1,...,d_r\}$.   
\begin{lem}
$\dim X/T=n-r$
\end{lem}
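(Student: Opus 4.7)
The plan is to reduce the problem to the case where $E$ is in Smith normal form. As in the proof of Theorem \ref{edgen}, applying $\zz$-invertible row operations on the coordinates $x_1,\dots,x_n$ (which are $k$-linear birational automorphisms of $X=k^n$) and column operations on $t_1,\dots,t_m$ (which are automorphisms of the torus $T$) does not change $\dim X/T$. After this reduction the action takes the simple form
\[
t\cdot(x_1,\dots,x_n)=(t_1^{d_1}x_1,\dots,t_r^{d_r}x_r,x_{r+1},\dots,x_n),
\]
so the problem becomes a direct computation.

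Now $x_{r+1},\dots,x_n$ are $T$-invariant and algebraically independent over $k$, which gives $k(x_{r+1},\dots,x_n)\subseteq k(X)^T$ and hence $\dim X/T\ge n-r$. For the reverse inequality I would compute the dimension of a generic $T$-orbit. Fix a point $a=(a_1,\dots,a_n)$ with all $a_i\neq 0$; the orbit map $T\to X$, $t\mapsto t\cdot a$, has image
\[
T\cdot a=\{(b_1,\dots,b_r,a_{r+1},\dots,a_n):b_i\in k^*\},
\]
because each map $k^*\to k^*$, $s\mapsto s^{d_i}$, is surjective over the algebraically closed field $k$. Thus $\dim(T\cdot a)=r$, and by the standard formula $\dim X/T=\dim X-\dim(\text{generic orbit})=n-r$.

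The only routine point to verify is that $\trdeg_k k(X)^T$ equals $\dim X$ minus the generic orbit dimension; this is the usual fibration-type dimension identity for rational quotients and is in any case consistent with $\dim X/T=\dim X-\dim T+\dim S=n-m+(m-r)=n-r$, where $S\subseteq T$ is the stabilizer in general position identified earlier in this section. No real obstacle arises; the lemma is essentially a bookkeeping consequence of the Smith-normal-form reduction already in use.
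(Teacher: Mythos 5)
Your proof is correct and rests on the same core computation as the paper's: over the algebraically closed field $k$, each power map $s\mapsto s^{d_i}$ is surjective on $k^*$, so the orbit of a point $a$ with all coordinates nonzero is exactly $(k^*)^r\times\{(a_{r+1},\dots,a_n)\}$. The only real difference is in how this fact is cashed out. The paper observes that $x_{r+1},\dots,x_n$ therefore separate orbits in general position and cites \cite[Lemma 2.1]{PV} to conclude that these functions generate $k(X)^T$, thereby exhibiting an explicit transcendence basis for the invariant field. You instead compute $\dim(T\cdot a)=r$ and invoke the fibration identity $\dim X/T=\dim X-\dim(\text{generic orbit})$. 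Both are routine consequences of Rosenlicht's theorem on rational quotients, so the two routes are equivalent in content; the paper's version yields slightly more (explicit generators of $k(X)^T$), while yours is purely a dimension count. One minor remark: the Smith-normal-form reduction you open with is already in force at the point where the lemma is stated in the paper (the section has reduced to that case and has further normalized to $E\in M_{n,r}$, so $m=r$ and $S$ is finite), so that step is redundant here, though harmless.
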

\begin{proof}
We show that the coordinate functions $x_{r+1},...,x_n$ generate $k(X)^G$.
Clearly $x_{r+1},...,x_n$ are algebraically independent and invariant under $T$, $k(x_{r+1},...,x_n)\subset k(X)^T$. Let $a,b\in X_0=(k^*)^n\subset X$ with $a_{r+1}=b_{r+1},...,a_n=b_n$ i.e. $x_{r+1},...,x_n$ assume the same values on $a$ and $b$. We can take $t_i=(b_i/a_i)^{1/d_i}$ (any $d_i$th root) for $i=1,...,r$ to see that $b$ is in the orbit of $a$. Hence, $x_{r+1},...,x_n$ separate orbits in general position and so $x_{r+1},...,x_n$ generate $k(X)^T$ by \cite[Lemma 2.1]{PV} and thus $\dim X/T=n-r$.
\end{proof}
Collecting all these results we finally get
\[
\ed[x]=\cd(X,T)+\dim X/T=n-\mbox{\#ones among }\{d_1,...,d_r\}
\]
as expected.

\newpage

\end{document}